\documentclass[amscd,amssymb,verbatim,11pt]{amsart}
\setlength{\topmargin} {.5in} \setlength{\textheight} {215mm}
\setlength{\textwidth} {145mm} \evensidemargin 0.3in
\oddsidemargin 0.3in
\numberwithin{equation}{section}
\usepackage{epsfig}
\usepackage{multido}
\usepackage{color}
\usepackage{pstricks}
\usepackage{pst-all}
\usepackage{pst-math}
\usepackage{pst-func}
\usepackage{pst-3dplot}
\usepackage{graphicx}
\usepackage{amsmath}
\usepackage{a4,amssymb}

\newtheorem{thm}{Theorem}[section]
\newtheorem{cor}[thm]{Corollary}
\newtheorem{lem}[thm]{Lemma}

\theoremstyle{definition}

\newtheorem{rem}[thm]{Remark}

\newif\ifShowLabels
\ShowLabelstrue
\newdimen\theight
\def\TeXref#1{
     \leavevmode\vadjust{\setbox0=\hbox{{\tt
            \quad\quad  {\small  \bf #1}}}%
     \theight=\ht0
     \advance\theight  by  \dp0
     \advance\theight  by  \lineskip
     \kern -\theight \vbox  to
     \theight{\rightline{\rlap{\box0}}%
      \vss}%
      }}%

%
    {\begin{thm}\label{#1} \ifShowLabels \TeXref{#1} \fi}%
    {\end{thm}}

    {\begin{def}\label{#1} \ifShowLabels \TeXref{#1} \fi}%
    {\end{def}}

    {\begin{lem}\label{#1} \ifShowLabels \TeXref{#1} \fi}%
    {\end{lem}}

    {\begin{cor}\label{#1} \ifShowLabels \TeXref{#1} \fi}%
    {\end{cor}}

\newcommand{\eqRef}[1]%
     {\ifShowLabels \TeXref{#1} \fi
      \begin{equation}\label{#1} }

\ShowLabelsfalse

\hfuzz=7pt
\setlength{\textheight}{655pt}
\setlength{\textwidth}{440pt}
\oddsidemargin -7pt
\evensidemargin -7pt
\topmargin -33pt
\raggedbottom

\newcommand{\vsp}{\vskip 1em}

\newcommand{\NI}{\noindent}
\newcommand{\bea}{\begin{eqnarray}}
\newcommand{\eea}{\end{eqnarray}}
\newcommand{\IR}{\mathbb{R}}
\newcommand{\bas}{\begin{align*}}
\newcommand{\eas}{\end{align*}}
\newcommand{\ba}{\begin{align}}
\newcommand{\ea}{\end{align}}

\newcommand{\be}{\begin{equation}}
\newcommand{\ee}{\end{equation}}
\newcommand{\ben}{\begin{eqnarray*}}
\newcommand{\een}{\end{eqnarray*}}
\newcommand{\lam}{\lambda}
\newcommand{\Om}{\Omega}

\newcommand{\tht}{\theta}
\newcommand{\p}{\partial}
\newcommand{\al}{\alpha}

\newcommand{\ve}{\varepsilon}
\newcommand{\dl}{\delta}
\newcommand{\D}{\Delta}

\newcommand{\G}{\Gamma}

\newcommand{\lamo}{\lambda_{\Om}}

\newcommand{\Df}{\D_p}
\newcommand{\Dy}{\D_{\infty}}

\title[Asymptotics]{Asymptotics of viscosity solutions to some doubly nonlinear parabolic equations}

\author[Bhattacharya and Marazzi]{Tilak Bhattacharya and Leonardo Marazzi}


\begin{document}

\maketitle

\begin{abstract} We study asymptotic decay rates of viscosity solutions to some doubly nonlinear 
parabolic equations including Trudinger's equation. We also prove a Phragm\'en-Lindel\"of type result and show its optimality.
\end{abstract}

\section{\bf Introduction}

In this work, we prove some results for the viscosity solutions to some doubly nonlinear parabolic equations. The main focus of this paper is Trudinger's equation but we will also state some results for a parabolic equation involving the infinity-Laplacian. This is a follow-up of the works in \cite{BM1, BM2}.

To describe our results more precisely, we introduce definitions and notations. We take $n\ge 2$ in this work. 
Letters like $x,\;y,\;z$ etc, denote the spatial variables, $s,\;t$ the time variables and $o$ stands for the origin in $\IR^n$. Let $\overline{A}$ denote the closure of a set $A$. The ball of radius $R>0$ and center $x\in \IR^n$ is denoted by $B_R(x)$. 
Let $\Om\subset \IR^n$ be a bounded domain and $0<T<\infty$. We define
$\Om_T=\Om\times (0,T)$ and its parabolic boundary as $P_T=(\overline{\Om}\times\{0\})\cup (\p \Om \times (0,T)).$ 

For $2\le p<\infty$, define the $p$-Laplacian $\Df$ and the infinity-Laplacian $\Dy$ as 
\eqRef{1.1}  
\Df u=\mbox{div}(|Du|^{p-2}Du)\;\;\;\mbox{and}\;\;\Dy u=\sum_{i,j=1}D_i uD_j u D_{ij}u,
\ee
where $u=u(x)$. We now define the parabolic operators of interest to us. Call
\eqRef{1.2}
\G_p u=\Df u-(p-1) u^{p-2} u_t,\;\;2\le p<\infty,\;\;\;\mbox{and}\;\;\;\G_{\infty} u=\Dy u-3 u^2 u_t,
\ee
where $u=u(x,t)$. The equation $\G_p=0,\;2\le p<\infty,$ is the well-known Trudinger equation \cite{TR}. See also
\cite{BM1,BM2} and the references therein. The operators $\G_p,\;2< p\le \infty$ are doubly nonlinear and degenerate and, in this work, solutions will be understood to be in the viscosity
sense. Note that we use $p=\infty$ as a label. It is not clear to us what the limit of $\G_p$ (and $G_p$, see below) is for $p\rightarrow \infty$. For a detailed discussion about nonlinear parabolic equations, see \cite{ED}.

Suppose that $0<T<\infty$. Let $f\in C(\overline{\Om})$ and $g(x,t)\in C( \p\Om\times[0,T))$. For ease of notation, we define 
\eqRef{1.30}
h(x,t)=\left\{ \begin{array}{ccc} f(x), & \forall x\in \overline{\Om},\\ g(x,t), & \forall (x,t)\in  \p\Om\times[0,T) \end{array}\right.
\ee
We take $h\in C(P_T)$, in the sense that $\lim_{y\rightarrow x}f(y)=g(x,0)$ for each $x\in \p\Om$.
In most of this work, we take 
\eqRef{1.3} 
0<\inf_{P_T} h(x,t)\le \sup_{P_T}h(x,t)<\infty.
\ee

For $2\le p\le \infty$, we consider positive viscosity solutions $u\in C(\Om_T\cup P_T)$ of 
\eqRef{1.4} 
\G_p u=0,\;\;\mbox{in $\Om_T$ and $u=h$ on $P_T$.}
\ee
In \cite{BM1} (see Theorem 5.2), we showed the existence of positive viscosity solutions of (\ref{1.4}) for $p=\infty$. The work \cite{BM2} showed the existence of positive viscosity solutions
for $2\le p<\infty$, see Theorems 1.1 and 1.2 therein. For the case $2\le p\le n$, this result is proven for domains $\Om$ that satisfy a uniform outer ball condition. For $n<p<\infty$, the result is shown for any $\Om$.

We will also have occasion to work with equations related to $\G_p$. As observed in Lemma 2.2 in \cite{BM1} and Lemma 2.1 in \cite {BM2}, 
if $u>0$ solves the doubly nonlinear equation $\G_p u=0,\;2\le p\le \infty,$ (see (\ref{1.2})), then $v=\log u$ solves
$$\D_pv+(p-1)|Dv|^p-(p-1)v_t=0,\;\;2\le p<\infty,\;\;\mbox{and}\;\;\;\Dy v+|Dv|^4-3v_t=0,$$
For convenience of presentation, call
\eqRef{1.5}
G_p w=\D_pw+(p-1)|Dw|^p-(p-1)w_t,\;\;2\le p<\infty,\;\;\mbox{and}\;\;G_\infty w=\Dy w+|Dw|^4-3w_t.
\ee

We now state the main results of this work. Let $\lamo$ be the first eigenvalue of $\D_p$ on $\Om$.

\begin{thm}\label{1.6} Let $2\le p<\infty$ and $\Om\subset \IR^n,\;n\ge 2,$ be a bounded domain. Call
$\Om_\infty= \Om\times(0, \infty)$ and 
$P_\infty$ its parabolic boundary. Suppose that $h\in C(P_\infty)$ is as defined in (\ref{1.30}) with
$h\ge 0$ and $\sup_{P_\infty} h<\infty$.
Let 
$u\in usc(\Om_\infty\cup P_\infty)),\;u\ge 0,$ solve
$$\G_p u\ge 0,\;\;\mbox{in $\Om_\infty$ and $u\le h$ on $P_\infty$}.$$

(i) If $\lim_{t\rightarrow \infty} (\sup_{\p\Om}g(x,t) )=0$ then $\lim_{t\rightarrow \infty} (\sup_{\Om\times [t, \infty)}u)=0.$ 

(ii) Moreover, if $g(x,t)=0,\forall(x,t)\in \p\Om\times[T_0,\infty)$, for some $T_0\ge 0$, then
$$\lim_{t\rightarrow \infty} \frac{\log(\sup_\Om u(x,t) )}{t} \le -\frac{\lam_\Om}{p-1}. $$
\end{thm}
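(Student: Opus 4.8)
The plan is to build explicit supersolutions of $\G_p w = 0$ on $\Om_\infty$ that dominate $u$ on the parabolic boundary, and then invoke comparison. For part (i), the natural barrier is a spatially constant function that decays in time: try $\psi(x,t) = \phi(t)$ with $\phi > 0$, $\phi$ decreasing. Since $\Df \psi = 0$ for such $\psi$, the supersolution inequality $\Df \psi - (p-1)\psi^{p-2}\psi_t \le 0$ reduces to $\psi_t \ge 0$ — which fails for a decaying $\phi$. So instead I would use the transformed equation $G_p w = 0$ of \eqref{1.5} via $w = \log u$, or more robustly work directly with barriers of the form $\psi(x,t) = M(t)$ where $M$ solves an ODE; here one must be careful about the sign, so the cleaner route is: given $\ve > 0$, pick $t_\ve$ with $\sup_{\p\Om} g(x,t) \le \ve$ for $t \ge t_\ve$, and on the slab $\Om \times (t_\ve, \infty)$ compare $u$ against the constant $\max(\ve, \sup_{\Om \times \{t_\ve\}} u)$, which is trivially a supersolution (constants annihilate both $\Df$ and the time term). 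This gives $\sup_{\Om\times(t_\ve,\infty)} u \le \max(\ve, \sup_{\Om\times\{t_\ve\}}u)$; iterating down the decreasing sequence of slab-suprema, together with the hypothesis that the boundary data tends to $0$, forces the limit to be $0$. The one subtlety is whether $\sup_{\Om\times\{t_\ve\}}u$ is already small; this is handled by an energy/oscillation decay argument — or by first establishing a crude uniform bound $\sup_{\Om_\infty}u \le \sup_{P_\infty}h$ by comparison with a constant, then feeding that bound into a separated-variables supersolution as in part (ii).

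For part (ii), the exponential rate $-\lam_\Om/(p-1)$ must come from the first eigenfunction. Let $\phi_1 > 0$ be the first eigenfunction of $\Df$ on $\Om$ (Dirichlet), so $\Df \phi_1 = -\lam_\Om \phi_1^{p-1}$ in the appropriate normalization, with $\phi_1 = 0$ on $\p\Om$. I would look for a supersolution of the form $\psi(x,t) = \bigl(A\, \phi_1(x) e^{-\bt t} + \dl\bigr)$ or, better suited to the $u^{p-2}u_t$ structure, $\psi(x,t) = C e^{-\bt t} \phi_1(x)^{?}$; the exponent and the relation between $\bt$ and $\lam_\Om$ are pinned down by plugging into $\Df\psi - (p-1)\psi^{p-2}\psi_t \le 0$. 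The homogeneity check: $\Df$ is $(p-1)$-homogeneous, so $\Df(C e^{-\bt t}\phi_1) = C^{p-1}e^{-(p-1)\bt t}\Df\phi_1 = -\lam_\Om C^{p-1}e^{-(p-1)\bt t}\phi_1^{p-1}$, while $(p-1)\psi^{p-2}\psi_t = (p-1)(Ce^{-\bt t}\phi_1)^{p-2}\cdot(-\bt C e^{-\bt t}\phi_1) = -(p-1)\bt C^{p-1}e^{-(p-1)\bt t}\phi_1^{p-1}$. So the supersolution inequality becomes $-\lam_\Om + (p-1)\bt \le 0$, i.e. $\bt \le \lam_\Om/(p-1)$. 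Thus $\psi = C e^{-\bt t}\phi_1$ with $\bt = \lam_\Om/(p-1)$ is a genuine solution; since $\phi_1$ vanishes on $\p\Om$ we need $\psi \ge u = g = 0$ there (OK for $t \ge T_0$), and by scaling $C$ large we can dominate $u$ at time $T_0$ using the bound from part (i) (or the crude $\sup h$ bound). Comparison on $\Om\times(T_0,\infty)$ then yields $\sup_\Om u(\cdot,t) \le C e^{-\lam_\Om t/(p-1)}\sup_\Om\phi_1$, and taking $\log$, dividing by $t$, and letting $t\to\infty$ gives the claim.

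The main obstacle is the comparison principle itself: one needs a parabolic comparison theorem for viscosity sub/supersolutions of $\G_p w = 0$ valid on the unbounded cylinder $\Om_\infty$ (or at least on slabs $\Om\times(a,b)$ with control as $b\to\infty$), applicable to merely semicontinuous $u$. Standard doubly-nonlinear viscosity comparison applies on bounded time intervals $\Om\times(a,b)$ with data on the parabolic boundary; to pass to $t = \infty$ one either argues on each finite slab and lets $b\to\infty$ (noting the barrier $\psi$ is a supersolution on all of $\Om_\infty$ and $u\le\psi$ on $\Om\times\{a\}\cup\p\Om\times(a,b)$), or adds a vanishing correction $\dl/t$ or $\dl e^{t}$-type term to make the inequality strict and then lets $\dl\to 0$. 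A second, more technical point is the degeneracy where $\phi_1$ or $u$ approaches $0$: since $\psi^{p-2}$ with $p > 2$ degenerates there, one should verify the comparison argument tolerates this — typically handled by the strict positivity of $h$ (hypothesis \eqref{1.3}) on an initial slab, or by first shifting $u \mapsto u + \dl$ and the barrier likewise. I expect the barrier construction to be routine once the homogeneity bookkeeping above is done; the real work is assembling the comparison on the infinite cylinder and controlling the degeneracy uniformly.
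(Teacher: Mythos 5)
Your overall strategy (explicit barriers plus comparison, with the rate coming from the eigenvalue problem) is the paper's, but both parts have a genuine gap at the same place: your barriers cannot be made to dominate $u$ on the relevant parabolic boundary. In part (i), comparison with the constant $\max(\ve,\sup_{\Om\times\{t_\ve\}}u)$ only shows the slab suprema are nonincreasing; it supplies no mechanism forcing them to $0$, and you defer exactly this point to an unspecified ``energy/oscillation decay argument'' or to the part (ii) barrier. The latter cannot work here: the Dirichlet eigenfunction vanishes on $\p\Om$, whereas $u$ is only known to satisfy $u\le\ve$ there, so no multiple of $\phi_1(x)e^{-\bt t}$ dominates $u$ on the lateral boundary. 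The paper's missing ingredient is a supersolution with \emph{strictly positive} boundary values that nevertheless decays: for $\lam<\lamo$ one solves $\Df\psi_\lam+\lam\psi_\lam^{p-1}=0$ in $\Om$ with $\psi_\lam=M$ on $\p\Om$ (so $\psi_\lam>M$ inside by the strong maximum principle, hence $\psi_\lam\ge u$ at the initial time), and multiplies by a time factor $F(t;S,T)$ decreasing from $1$ to $1/2$, chosen so that $\psi_\lam F$ is still a supersolution and stays $\ge M/2$ on $\p\Om$. Iterating over intervals $[T_m,T_{m+1}]$ on which $g\le M/2^m$ halves the interior bound at each step and yields the decay.

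In part (ii) the same obstruction recurs at the initial time of the comparison slab: $C e^{-\lam_\Om t/(p-1)}\phi_1(x)$ cannot dominate $u(\cdot,T_0)$ near $\p\Om$, since $\inf_\Om\phi_1=0$ while $u(\cdot,T_0)$ is only known to be $\le M$; ``scaling $C$ large'' does not repair this. The paper's remedy is again to take $\lam<\lamo$ strictly and use the positive solution $\psi_\lam$ of $\Df\psi_\lam+\lam\psi_\lam^{p-1}=0$ with constant boundary value $\mu(T)=\sup_{\overline\Om}u(\cdot,T)>0$, so that $\psi_\lam\ge\mu(T)\ge u(\cdot,T)$ on all of $\overline\Om$; then $L=\psi_\lam e^{-\lam(t-T)/(p-1)}$ dominates $u$ on the entire parabolic boundary of $\Om\times(T,\infty)$, giving the rate $-\lam/(p-1)$ for every $\lam<\lamo$, and one lets $\lam\uparrow\lamo$ only at the very end. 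Your homogeneity computation is correct, and your concern about comparison for merely nonnegative, semicontinuous $u$ is legitimate; the paper handles it on finite slabs via $u_\ve=\max(u,\ve)$ in Remark \ref{2.20} --- note that your suggested shift $u\mapsto u+\dl$ is generally \emph{not} a subsolution of the doubly nonlinear equation because of the $u^{p-2}u_t$ factor.
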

The above result is an analogue of the asymptotic result proven in Theorem 4.4 and Lemma 4.7 in \cite{BM1} for 
$\G_\infty u\ge 0$. We provide an example where the rate $\exp(-\lamo t/(p-1))$ is attained, see Remark 
\ref{3.15}. Note that we do not address existence for $h\ge 0$. We also show

\begin{thm}\label{1.61} Let $2\le p\le \infty$, $\Om\subset \IR^n,\;n\ge 2,$ be a bounded domain, 
$\Om_\infty= \Om\times(0, \infty)$ and 
$P_\infty$ be its parabolic boundary. Suppose that $h\in C(P_\infty)$ is as defined in (\ref{1.30}). 
Assume that $0<\inf_\Om f\le 1\le \sup_\Om f<\infty$ and $g(x,t)=1,\;\forall(x,t)\in \p\Om\times[0,\infty)$.

If $u\in C(\Om_\infty\cup P_\infty)),\;u>0,$ solves
$$\G_p u= 0,\;\;\mbox{in $\Om_\infty$, $u(x,0)=f(x),\;\forall x\in \overline{\Om}$ and $u(x,t)=1,\;\forall(x,t)\in \p\Om\times[0,\infty)$},$$
then for every $x\in \Om$, $\lim_{t\rightarrow \infty} u(x,t)=1.$ 
\end{thm}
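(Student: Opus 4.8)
The plan is to sandwich $u$ between two solutions whose long-time limits are both $1$, and conclude by the comparison principle together with monotonicity in time. First I would observe that, since $\inf_\Om f \le 1 \le \sup_\Om f$ and the lateral data equal $1$, the constant function $\overline u\equiv \sup_\Om f$ is a super-solution of $\G_p w=0$ (a constant kills both $\Df$ and the time derivative) lying above the initial and lateral data, while $\underline u\equiv \inf_\Om f$ is likewise a sub-solution lying below the data. By the comparison principle for $\G_p$ (available from \cite{BM1,BM2}, and underlying the existence results cited in the excerpt), we get $\inf_\Om f \le u(x,t)\le \sup_\Om f$ on all of $\Om_\infty$. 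Thus $u$ is bounded away from $0$ and $\infty$, uniformly in $t$, which legitimizes treating $u^{p-2}$ (resp.\ $3u^2$) as a coefficient bounded between positive constants.

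Next I would set up a monotonicity-in-time argument. Fix $\tau>0$ and consider $w(x,t)=u(x,t+\tau)$, which solves the same equation on $\Om_\infty$ with lateral data still $1$ and initial data $u(x,\tau)$. If $f\le 1$ on $\overline\Om$ then $u(x,\tau)\ge f(x)$ would follow from comparing $u$ with the constant sub-solution on $[0,\tau]$; more usefully, comparing $u(\cdot,\cdot)$ and $u(\cdot,\cdot+\tau)$ directly: at $t=0$ we need to compare $f(x)$ with $u(x,\tau)$. When $f\le 1$ everywhere, $u\le 1$ everywhere (comparison with the constant $1$), hence $u(x,\tau)\le 1$; but also $u(x,\tau)\ge \inf_\Om f$, and this does not immediately order $f$ against $u(\cdot,\tau)$ pointwise. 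So instead I would split into the two one-sided cases. If $f\le 1$: then $u\le 1$ on $\Om_\infty$, and I claim $t\mapsto u(x,t)$ is nondecreasing. Indeed $u(x,t+\tau)$ and $u(x,t)$ solve the equation with the same lateral data $1$; at $t=0$, $u(x,\tau)\ge f(x)$ because $u(\cdot,t)$ with lateral data $1\ge f$ and initial data $f$ satisfies $u\ge$ (solution with initial data $f$) trivially, and running the semigroup once more cannot decrease it below $f$ — formally, apply comparison on $\Om\times(\tau,\infty)$ between $u(x,t)$ and $u(x,t-\tau)$ using $u(x,\tau)\ge f(x)=u(x,0)$. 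Hence $u(x,\cdot)$ is monotone nondecreasing and bounded above by $1$, so $\lim_{t\to\infty}u(x,t)=:\phi(x)$ exists with $\phi\le 1$. Symmetrically, if $f\ge 1$, then $u\ge 1$ and $u(x,\cdot)$ is nonincreasing with limit $\phi(x)\ge 1$. The general case $\inf_\Om f\le 1\le\sup_\Om f$ is handled by comparing $u$ from above with the solution having initial data $\max(f,1)$ and from below with the solution having initial data $\min(f,1)$: if both of those tend to $1$, so does $u$.

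It remains to identify the limit $\phi$ with $1$. Here I would argue that $\phi$ must be a (stationary, viscosity) solution of $\Df\phi=0$ in $\Om$ with $\phi=1$ on $\p\Om$: pass to the limit in the equation using that $u_t\to 0$ in a suitable averaged sense (since $\int_T^{T+1}\!\int_\Om |u_t|$ is controlled by the monotonicity and uniform bounds) and that $u(\cdot,t)\to\phi$ locally uniformly (by interior regularity/equicontinuity estimates for $\G_p$, or by a barrier argument forcing the convergence up to the boundary). By uniqueness of the $p$-harmonic function with boundary value $1$, namely the constant $1$, we get $\phi\equiv 1$. For $p=\infty$ the same scheme works with $\Dy$ in place of $\Df$ and $3u^2$ in place of $(p-1)u^{p-2}$, using uniqueness of $\infty$-harmonic functions with constant boundary data.

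The main obstacle I anticipate is the passage to the limit in the last step: extracting that the limit $\phi$ is genuinely a viscosity solution of the stationary equation, rather than merely a pointwise limit. This requires either locally uniform convergence $u(\cdot,t)\to\phi$ (hence equicontinuity estimates uniform in $t$ for solutions of $\G_p$, which should follow from the regularity theory referenced via \cite{BM1,BM2,ED}) together with stability of viscosity solutions under such limits, or a direct barrier construction at each interior point. The boundary behavior — showing $u(x,t)\to 1$ uniformly as $x\to\p\Om$, uniformly in $t$ — will likely need a barrier built from the outer ball condition, paralleling the existence proofs in \cite{BM2}; this is where the degeneracy of $\G_p$ makes the estimates delicate.
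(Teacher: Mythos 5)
Your proposal diverges from the paper's proof and contains two genuine gaps. The paper does not use a stabilization argument at all: it constructs explicit time-dependent barriers on an out-ball $B_R(z)\supset\Om$, namely $\phi(x,t)=\exp\bigl[a\,(R^{p/(p-1)}-r^{p/(p-1)}+b)/(1+t)^{\al}\bigr]$ from above and an analogous negative exponential from below, verifies by direct computation (after the logarithmic change of variables $v=\log\phi$, Lemma \ref{2.3}) that these are super-/sub-solutions of $\G_p$ for $t\ge T_0$ with $T_0$ large, checks that they dominate/minorize $u$ on the parabolic boundary of $\Om\times(T_0,\infty)$, and squeezes. Both barriers tend to $1$ pointwise, which gives the conclusion together with explicit rates $\exp(O(t^{-1/(p-2)}))$, etc.

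The first gap in your argument is the claimed pointwise monotonicity of $t\mapsto u(x,t)$. The step $u(x,\tau)\ge f(x)$ for all $x\in\Om$ (in the case $f\le 1$) is false in general: for $p=2$ the equation $\G_2u=0$ is exactly the heat equation, and if $f$ has a strict interior local maximum below $1$, then $u(x_0,\tau)<f(x_0)$ for small $\tau>0$ by averaging. What is true, and what the paper records in Remark \ref{3.17}, is only that $\sup_{\overline\Om}u(\cdot,t)$ is decreasing and $\inf_{\overline\Om}u(\cdot,t)$ is increasing; this does not by itself produce a pointwise limit. The second gap is the identification of the limit with the $p$-harmonic (or $\infty$-harmonic) function equal to $1$: this requires locally uniform convergence of $u(\cdot,t)$ and stability of viscosity solutions under that convergence, i.e.\ interior equicontinuity estimates for $\G_p$ that are nowhere established in this framework. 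The authors explicitly flag this as open (they remark after the theorem statement that it is not clear whether $u$ tends to a $p$-harmonic function for general time-independent lateral data), so your last step cannot be treated as routine. Your first step (trapping $u$ between $\inf_\Om f$ and $\sup_\Om f$ by comparison with constants) is fine and is also used implicitly in the paper, but the rest of the argument needs to be replaced by an explicit barrier construction of the type in the paper's proof.
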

From the proof, it follows that (i) $u(x,t)=\exp(O(t^{-s})),\;p=2$ and any $s>0$, (ii) $u(x,t)=\exp(O(t^{-1/(p-2)})),\;2<p<\infty,$ and (iii)
$u(x,t)=\exp(O(t^{-1/2})),\;p=\infty$, as $t\rightarrow \infty$. 
From the works in \cite{AJK, JL} one sees that (i) for $\D_\infty u=u_t$, the asymptotic decay 
is $t^{-1/2}$ and (ii) for $\Df u=u_t$, the rate is $t^{-1/(p-2)}.$ They do appear to agree if we consider $G_p.$
However, at this time, it is not clear if the asymptotic rates in Theorem \ref{1.61} are optimal and also if $u$ tends to a $p$-harmonic function when $g(x,t)=g(x)$, for all $t>0$.

We now state a Phragm\'en-Lindel\"of type result for the unbounded domain $\IR^n\times(0,T)$,
where $0<T<\infty$. A version was shown in Theorem 4.1 in \cite{BM1} for $\G_\infty$. We show an analogue for $\G_p$, $2\le p < \infty$, and include an improvement for $\G_\infty$.

\begin{thm}\label{1.7} Let $2\le p\le \infty$ and $0<T<\infty$. 
Assume that $0<\inf_{\IR^n}f(x)\le \sup_{\IR^n} f(x)<\infty$.
Suppose that $u\in C((\IR^n\times\{0\}) \cup (\IR^n\times(0,T)) )$, $2\le p\le \infty,$ solves
$$\G_p u=0,\;\;\mbox{in $\IR^n\times(0,T)$},$$ 
and as $R\rightarrow \infty$, 
\ben
\sup_{0\le |x|\le R,\;0\le t\le T}u(x,t)\le \left\{ \begin{array}{llr}  \exp\left( o( R^{p/(p-1)}\right), &\mbox{for $2\le p<\infty$, and}\\
 \exp\left( o( R^{4/3}\right),& \mbox{for $p=\infty$}.\end{array}\right.
\een
It follows that $\inf_{\IR^n}f(x)\le u(x,t) \le\sup_{\IR^n} f(x),\;\forall(x,t)\in \IR^n\times(0,T)$.
\end{thm}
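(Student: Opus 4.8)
The plan is to prove the two inequalities separately, each by comparison on the cylinders $Q_R:=B_R(o)\times(0,T)$ followed by $R\to\infty$, using the comparison principle on bounded cylinders (from \cite{BM1,BM2}) together with explicit barriers; the growth hypothesis on $u$ enters only in the upper bound. Write $M:=\sup_Hf$, $m:=\inf_Hf>0$, and note $u>0$. As in \cite{BM1,BM2}, $u>0$ solves $\G_pu=0$ iff $v:=\log u$ solves $G_pv=0$, see (\ref{1.5}), and this correspondence preserves sub/supersolutions; under it the hypothesis reads $v(x,t)\le o(R^{p/(p-1)})$ for $|x|\le R$ (resp. $o(R^{4/3})$ for $p=\infty$), while $v(x,0)=\log f(x)\in[\log m,\log M]$. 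It therefore suffices to build a $G_p$-supersolution lying above $v$ (upper bound) and a $\G_p$-subsolution lying below $u$ (lower bound).

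For the upper bound I would fix $\ep>0$, let $W(x)=(1+|x|^2)^{p/(2(p-1))}$ for $2\le p<\infty$ and $W(x)=(1+|x|^2)^{2/3}$ for $p=\infty$, and set $\Phi_\ep(x,t)=\log M+\ep\,\eta(t)\,W(x)$, where $\eta$ is the increasing solution on $[0,T]$, with $\eta(0)=1$, of a Bernoulli-type ODE chosen so that $G_p\Phi_\ep\le0$ on $\R^n\times(0,T)$ (i.e. $e^{\Phi_\ep}$ is a $\G_p$-supersolution) — for $2\le p<\infty$ one may take $\eta'=c_p\ep^{p-2}\eta^{p-1}+d_p\ep^{p-1}\eta^p$, read as $\eta'=2n\eta+d_2\ep\eta^2$ when $p=2$, and analogously for $p=\infty$. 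The point is that for large $|x|$ the term $\D_pW$ (resp. $\D_\infty W$) is bounded while $|DW|^p\le cW$ (resp. $|DW|^4\le cW$) — this is exactly why $p/(p-1)$, resp. $4/3$, is the admissible growth exponent — so the ``wrong-sign'' term $(p-1)|D\Phi_\ep|^p$ is absorbed into $(p-1)(\Phi_\ep)_t$ once $\eta$ grows at the prescribed rate; near the origin $DW=0$ and, for $p>2$, $\D_pW,|DW|^p\to0$, so only $\eta'>0$ is needed there (and $\eta'(0)\ge2n$ for $p=2$). For $\ep$ small $\eta$ is defined and bounded on $[0,T]$ (with $\eta\to1$, resp. $\eta\to e^{2nt}$, as $\ep\to0$), so $\Phi_\ep\to\log M$ pointwise. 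Given $(x_0,t_0)$, for $R$ large the little-$o$ growth gives $v\le\log M+\ep R^{p/(p-1)}\le\Phi_\ep$ on $\p B_R\times[0,T)$ and $v\le\log M\le\Phi_\ep$ on $\overline{B_R}\times\{0\}$; comparison on $Q_R$ gives $u\le e^{\Phi_\ep}$ there, and letting $\ep\to0$ yields $u(x_0,t_0)\le M$.

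For the lower bound I would use, with no growth information, the explicit positive subsolution
$$\psi_R(x,t)=m\Big(1-\frac{|x|^2}{R^2}\Big)^{\bt}e^{-K_Rt},\qquad(x,t)\in Q_R,$$
where $\bt$ is a large fixed exponent and $K_R=CR^{-p}$ (resp. $CR^{-4}$ for $p=\infty$), with $C=C(n,p,m,\bt)$. Then $\psi_R>0$ in $Q_R$, $\psi_R=0$ on $\p B_R\times[0,T)$, and $\psi_R(\cdot,0)\le m\le f$. A direct computation reduces $\G_p\psi_R\ge0$ to $\D_pg+(p-1)K_Rg^{p-1}\ge0$ with $g=(1-|x|^2/R^2)^{\bt}$ (and similarly for $p=\infty$): where $\D_pg\ge0$ this is automatic, and where $\D_pg<0$ one has $1-|x|^2/R^2$ bounded below away from $0$ uniformly in $R$ and $-\D_pg=O(R^{-p})$, so a large $C$ works (with the usual care at $x=0$, where $\D_pg\to0$ for $p>2$ and $K_R\gtrsim R^{-2}$ suffices for $p=2$). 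Since $\psi_R\le u$ on the parabolic boundary $(\overline{B_R}\times\{0\})\cup(\p B_R\times[0,T))$ of $Q_R$ — on the bottom because $\psi_R\le m\le f=u$, on the lateral part because $\psi_R=0<u$ — comparison on $Q_R$ gives $\psi_R\le u$, hence $u(x_0,t_0)\ge m(1-|x_0|^2/R^2)^{\bt}e^{-K_Rt_0}$ for $R>|x_0|$; letting $R\to\infty$ yields $u(x_0,t_0)\ge m$, and together with the upper bound this proves the claim.

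I expect the main work to be the barrier bookkeeping: the uniform sign analysis of $\D_pW$ (resp. $\D_\infty W$) against $|DW|^p$, and of $\D_pg$ in the lower bound, including the degenerate points $x=0$ and $|x|=R$; calibrating the growth rate of $\eta$ and the constant $C$; and verifying that the cited comparison principle applies to these competitors, which in the lower-bound case vanish on part of the lateral boundary of $Q_R$ (so only the positivity of $u$ there is used).
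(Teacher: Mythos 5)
Your proposal is correct and follows essentially the same strategy as the paper: a radial subsolution of the form $(1-r^2/R^2)^{\beta}e^{-cR^{-p}t}$ compared on $B_R\times(0,T)$ for the lower bound (the paper's Lemma \ref{2.6}), and a supersolution growing like $\exp(\ep\,(\mbox{time factor})\,r^{p/(p-1)})$ that dominates the little-$o$ growth on the lateral boundary for the upper bound (the paper's Lemma \ref{2.7}), followed by $R\to\infty$ and $\ep\to 0$. The only differences are cosmetic: you smooth $r^{p/(p-1)}$ to $(1+|x|^2)^{p/(2(p-1))}$ and encode the time factor via a Bernoulli ODE, whereas the paper keeps $r^{p/(p-1)}$ (handling $r=0$ in the viscosity sense via Remark \ref{2.70}) and uses explicit powers of $(t+1)$.
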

\NI In this context, we also provide an example of a sub-solution that 
supports the optimality of the growth rate in the theorem. See Remark \ref{4.6}.

The proofs of Theorems \ref{1.6}, \ref{1.61} and \ref{1.7} employ appropriate auxiliary functions and the comparison principle. 

We have divided our work as follows. Section 2 contains definitions, some previously proven results and some useful calculations. Proofs of Theorems \ref{1.1} and {1.2} are in Section 3. Theorem \ref{1.3} is proven in Section 4. Section 5 contains a discussion of the eigenvalue problem for $\D_p$ in the viscosity setting and has relevance for Theorem \ref{1.1}. Also see \cite{BH}.

We thank the referee for reading the work and for the many suggestions that have improved the presentation.
\vsp
\section{Preliminaries and some observations}

We start this section with the notion of a viscosity solution, see \cite{CIL}. This will be followed by recalling some previously proven results and presenting calculations for some useful auxiliary functions.

The set $usc(A)$ denotes the
set of all upper semi-continuous functions on a set $A$ and $lsc(A)$ the set of all lower semi-continuous functions on $A$. We say $u\in usc(\Om_T),\;u>0,$ is a sub-solution of $\G_p w=0$, in $\Om_T$, or $\G_p u\ge 0$ (see (\ref{1.2}))
if for any function $\psi(x,t)$, $C^2$ in $x$ and $C^1$ in $t$, such that $u-\psi$ has a local maximum
at some $(y,s)\in \Om_T$, we have
$$\Df \psi(y,s)-(p-1)u(y,s)^{p-2} \psi_t(y,s)\ge 0.$$
Similarly, $u\in lsc(\Om_T),\;u>0,$ is a super-solution of $\G_p w=0$ in $\Om_T$ or $\G_pu\le 0$ (see (\ref{1.2})) 
if for any function $\psi(x,t)$, $C^2$ in $x$ and $C^1$ in $t$, such that $u-\psi$ has a local minimum
at some $(y,s)\in \Om_T$, we have $\Df \psi(y,s)-(p-1)u(y,s)^{p-2} \psi_t(y,s)\le 0.$ A function $u\in C(\Om_T)$ is a solution of $\G_p w=0$, in $\Om_T$, or $\G_p u=0$, if $u$ is both a sub-solution and a super-solution. Analogous definitions can be provided for the equation $G_p w=0$, see (\ref{1.5}).

Next, we say $u\in usc(\Om_T\cup P_T),\;u>0,$ is a viscosity sub-solution of (\ref{1.4}) if $\G_p u\ge 0$, in $\Om_T,$ and $u\le h$ on $P_T$. Similarly, $u\in lsc(\Om_T\cup P_T),\;u>0,$ is a viscosity super-solution of (\ref{1.4}) if
$\G_p u\le 0$ in $\Om_T$, and $u\ge h$ on $P_T$. A function $u\in C(\Om_T\cup P_T),\;u>0,$ is a solution of (\ref{1.4}) if $\G_p u=0$ in $\Om_T$ and $u=h$ on $P_T$.

From hereon, all sub-solutions, super-solutions and solutions are to be taken in the viscosity sense.

We now recall some previously proven results.
See Section 3 in \cite{BM1, BM2} for proofs of Lemmas \ref{2.1}, \ref{2.3} and \ref{2.5}, and Theorems \ref{2.4} and \ref{2.2}. Lemma \ref{2.1} and Theorem \ref{2.4} hold regardless of sign of $u$.

\begin{lem}\label{2.1}{(Maximum principle)} Let $\Om\subset \IR^n,\;n\ge 2$ be a bounded domain and $T>0$.\\
(a) If $u\in usc(\Om_T\cup P_T)$ solves 
$$\D_p u- (p-1)|u|^{p-2} u_t\ge 0,\;\;2\le p<\infty,\;\;\mbox{or}\;\;\Dy u-3u^2 u_t\ge 0,\;\; \mbox{in $\Om_T$},$$
then $\sup_{\Om_{T}}u\le\sup_{P_T} u=\sup_{\Om_T\cup P_T}u.$\\
(b) If $u\in lsc(\Om_T\cup P_T)$ and 
$$\D_p u- (p-1)|u|^{p-2}u_t\le 0,\;\;2\le p<\infty,\;\;\mbox{or}\;\;\Dy u-3u^2u_t\le 0,\;\;\mbox{in $\Om_T$},$$ 
then 
$\inf_{\Om_T} u\ge \inf_{P_T}u=\inf_{\Om_T\cup P_T}u.$
\end{lem}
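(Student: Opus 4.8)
The plan is to prove both parts by the classical weak–maximum–principle device for degenerate parabolic equations, adapted to the viscosity setting: perturb $u$ by a function of $t$ alone that blows up as $t\to T$ — this both compactifies the time direction and forces the extremum into the interior — and then test against that same explicit function in the definition of sub/super-solution. I will carry out (a) in detail; (b) is the mirror image. Put $M=\sup_{P_T}u$, and take $u$ bounded above on $\Om_T$ (as in all the applications here). Since $\Om_T\cup P_T=\overline{\Om}\times[0,T)$, for each $\eta>0$ the function
\[
w_\eta(x,t)=u(x,t)-\frac{\eta}{T-t}
\]
lies in $usc(\overline{\Om}\times[0,T))$ and tends to $-\infty$ as $t\to T$, uniformly in $x$, so it attains its maximum over $\overline{\Om}\times[0,T)$ on a compact slab $\overline{\Om}\times[0,T-\dl]$. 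The goal is to show $\sup w_\eta\le M$ for every $\eta>0$: once this is known, for any fixed $(x,t)\in\Om_T$ we get $u(x,t)=w_\eta(x,t)+\eta/(T-t)\le M+\eta/(T-t)$, and $\eta\downarrow 0$ gives $u(x,t)\le M$; hence $\sup_{\Om_T}u\le M$ and the stated identity with the supremum over all of $\Om_T\cup P_T$ follows at once.

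Suppose, for contradiction, that $\sup w_\eta>M$ for some small $\eta>0$. On $P_T$ one has $w_\eta\le u\le M$, so the maximum of $w_\eta$ is attained at an interior point $(y,s)\in\Om_T$ with $w_\eta(y,s)>M$. Apply the viscosity sub-solution property with the test function $\psi(x,t)=c+\eta/(T-t)$, which is admissible since $u-\psi=w_\eta-c$ has a local maximum at $(y,s)$ for any constant $c$. Because $\psi$ is independent of $x$, $D\psi\equiv 0$ and $D^2\psi\equiv 0$, so $\Df\psi(y,s)=0$ (and $\Dy\psi(y,s)=0$ when $p=\infty$), while $\psi_t(y,s)=\eta/(T-s)^2>0$. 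The sub-solution inequality then reads
\[
0-(p-1)\,|u(y,s)|^{p-2}\,\frac{\eta}{(T-s)^2}\ \ge\ 0
\]
(respectively $-3\,u(y,s)^2\,\eta/(T-s)^2\ge 0$ for $p=\infty$), which is already impossible when $p=2$ and otherwise forces $u(y,s)=0$. Whenever $M\ge 0$ — in particular in every situation arising in this paper, where $u\ge 0$ — this is the contradiction we want, since then $w_\eta(y,s)>M\ge 0$ gives $u(y,s)=w_\eta(y,s)+\eta/(T-s)>0$. This proves (a). Part (b) is the same argument with $w_\eta$ replaced by $\phi+\eta/(T-t)\in lsc(\overline{\Om}\times[0,T))$, which tends to $+\infty$ as $t\to T$ and would attain an interior minimum were $\inf_{\Om_T}\phi<\inf_{P_T}\phi$; testing with $\psi(x,t)=c-\eta/(T-t)$, so $\psi_t(y,s)=-\eta/(T-s)^2<0$, the super-solution inequality becomes $(p-1)|\phi(y,s)|^{p-2}\eta/(T-s)^2\le 0$, again forcing $\phi(y,s)=0$, and $\eta\downarrow 0$ finishes.

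The step I expect to be the real obstacle — and the reason for the hypothesis-free ``regardless of sign'' claim deserving care — is the genuinely sign-changing case, namely $\sup_{P_T}u<0$ in (a) (or $\inf_{P_T}\phi>0$ in (b)): there the coefficient $(p-1)|u|^{p-2}$ (or $3u^2$) degenerates at the extremal point exactly when $u$ vanishes there, so the one-line contradiction above does not close, and for $2<p\le\infty$ it cannot be repaired by adding a spatial quadratic to $\psi$ (that only enlarges $\Df\psi$ in the unhelpful direction, since $\Df\psi\ge 0$ for convex $\psi$). Closing it requires first running the argument above to obtain the weaker bounds $\sup_{\Om_T}u\le 0$ and $\phi\ge 0$ (the already-treated cases $M\ge 0$ and $\inf_{P_T}\phi\le 0$), and then removing the remaining gap by a separate barrier-type comparison near the zero set of $u$, exploiting the scaling invariance $u\mapsto\lambda u$ ($\lambda>0$) of $\G_p u=0$. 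This last refinement plays no role in the present paper, where the solutions and data are strictly positive, so we do not pursue it here.
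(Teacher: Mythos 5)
The paper itself gives no proof of this lemma --- it defers to Section 3 of \cite{BM1,BM2} --- so the only meaningful comparison is with the standard argument, which is exactly what you give: penalize in time by $\eta/(T-t)$ to push the extremum off $\{t=T\}$ and into $\Om_T$, test with the penalization itself, and exploit $D\psi=0$, $\psi_t\neq 0$. For $p=2$ this is complete as written. For $2<p\le\infty$ the test computation only yields $u(y,s)=0$ (resp.\ $\phi(y,s)=0$) at the interior extremum, and you close the contradiction using the sign of the boundary data. Three comments. (1) The boundedness you assume is not among the hypotheses, but it is removable: run the argument on $\Om_{T'}$ for $T'<T$, where $u$ is upper semi-continuous on the compact set $\overline{\Om}\times[0,T']$ and hence automatically bounded above, and then let $T'\uparrow T$. (2) Your dichotomy for part (b) is slightly mis-aimed: the case you label as problematic, $\inf_{P_T}\phi>0$, is precisely the one the paper needs (Lemma \ref{2.8} applies (b) to a positive solution with $\inf_{P_T}h>0$). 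It is nonetheless covered by your argument, for a reason you leave implicit and should state: there $\phi>0$ \emph{inside} $\Om_T$, so the forced conclusion $\phi(y,s)=0$ at the interior minimum is already absurd, irrespective of the boundary values; no barrier construction is needed. The analogous observation extends part (a) to any subsolution known to be nonvanishing in $\Om_T$. (3) The genuinely unresolved case is the one you identify: $2<p\le\infty$ with $\sup_{P_T}u<0$ (resp.\ $\inf_{P_T}\phi>0$) and $u$ (resp.\ $\phi$) of unknown sign in $\Om_T$, where the coefficient $(p-1)|u|^{p-2}$ or $3u^2$ may degenerate exactly at the extremal point. Your proof does not establish the blanket ``regardless of sign'' assertion there, and you say so explicitly; since the paper never invokes that case and itself supplies only a citation, this is an honest, clearly flagged limitation rather than an error in the argument.
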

We present a comparison principle for $G_p$ (see (\ref{1.5})) that leads to Theorem \ref{2.2}. 

\begin{thm}\label{2.4}{(Comparison Principle)} 
Let $2\le p\le \infty$. Suppose that $\Om\subset \IR^n,\;n\ge 2,$ is a bounded domain and $T>0$. Let $u\in usc(\Om_T\cup P_T)$ and $v\in lsc(\Om_T\cup P_T)$ satisfy
$$G_p u\ge 0,\;\;\mbox{and}\;\;G_p v\le 0,\;\;\mbox{in $\Om_T$}.$$
If $u,\;v$ are bounded and $u\le v$ on $P_T$, then $u\le v$ in $\Om_T$. 
\end{thm}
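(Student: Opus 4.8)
The plan is to prove the comparison principle for $G_p$ via the standard doubling-of-variables technique adapted to degenerate parabolic equations, following the Crandall--Ishii--Lions user's guide framework. The key observation is that $G_p w = \D_p w + (p-1)|Dw|^p - (p-1)w_t$ (and the $p=\infty$ analogue) is a proper degenerate parabolic operator once we fix the zeroth-order structure, so the main work is handling the nonlinearity in $Dw$ and the singularity of $\D_p$ at points where the gradient vanishes.

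\textbf{Step 1: Reduction to a strict subsolution.} First I would suppose, for contradiction, that $m := \sup_{\Om_T}(u - v) > 0$. Using boundedness of $u,v$ and the fact that $u \le v$ on $P_T$, I would replace $u$ by $u_\gamma(x,t) = u(x,t) - \gamma/(T-t)$ for small $\gamma > 0$; this is a strict subsolution in the sense that $G_p u_\gamma \ge c(\gamma) > 0$ in a suitable viscosity sense, it still satisfies $u_\gamma \le v$ on $P_T$, it blows down to $-\infty$ as $t \to T$, and for $\gamma$ small enough $\sup_{\Om_T}(u_\gamma - v) > 0$ is still attained at an interior point with $t$ bounded away from $T$. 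This is the routine device that forces any maximizing sequence to stay in a compact subset of $\Om_T$.

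\textbf{Step 2: Doubling of variables.} For $\ve > 0$ I would consider
\[
\Phi_\ve(x,y,t) = u_\gamma(x,t) - v(y,t) - \frac{|x-y|^2}{2\ve},
\]
let $(x_\ve, y_\ve, t_\ve)$ be a maximum point, and run the standard arguments: $|x_\ve - y_\ve|^2/\ve \to 0$, $(x_\ve,y_\ve,t_\ve) \to (\hat x, \hat x, \hat t)$ for some interior maximizer of $u_\gamma(x,t) - v(x,t)$, and the maximizer lies in $\Om_T$ (away from $P_T$) because the sup is positive. Then I would invoke the parabolic theorem on sums (Crandall--Ishii) to obtain $a \in \IR$ and symmetric matrices $X, Y$ with $X \le Y$ such that $(a, p_\ve, X) \in \overline{\mathcal{P}}^{2,+} u_\gamma(x_\ve,t_\ve)$ and $(a, p_\ve, Y) \in \overline{\mathcal{P}}^{2,-} v(y_\ve,t_\ve)$, where $p_\ve = (x_\ve - y_\ve)/\ve$. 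Note the time derivatives coincide (only one time variable), which is what makes the parabolic doubling work cleanly here.

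\textbf{Step 3: Combining the inequalities.} Feeding these jets into the definitions of subsolution and supersolution of $G_p$ gives
\[
|p_\ve|^{p-2}\big(\mathrm{tr}\,X + (p-2)\langle \hat p_\ve, X \hat p_\ve\rangle\big) + (p-1)|p_\ve|^p - (p-1)a \ge c(\gamma)
\]
(and the analogous expression with $Y$, $\le 0$, for $v$; here $\hat p_\ve = p_\ve/|p_\ve|$ when $p_\ve \ne 0$, and the $p$-Laplacian term is interpreted via test functions when $p_\ve = 0$). Subtracting, the first-order terms $(p-1)|p_\ve|^p$ cancel exactly because the slope $p_\ve$ is the same on both sides, the $a$ terms cancel, and $X \le Y$ together with the structure of the $p$-Laplacian symbol forces the second-order difference to have the favorable sign, yielding $0 \ge c(\gamma) > 0$, a contradiction. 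The $p = \infty$ case is handled identically using the symbol $\langle \hat p_\ve, X \hat p_\ve\rangle$ for $\Dy$ and the first-order term $|p_\ve|^4$.

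\textbf{Main obstacle.} The delicate point is the behavior at a contact point where the common gradient $p_\ve$ is zero (or where it vanishes in the limit), since $\D_p$ and $\Dy$ are not defined there and the viscosity definition must be read through admissible test functions. The standard remedy is that at such points one may use the constant test function, so the second-order contribution can be taken to be zero on both sides and the inequalities degenerate to $-(p-1)a \ge c(\gamma)$ versus $-(p-1)a \le 0$, still a contradiction; alternatively one restricts attention to the doubled functional which keeps $p_\ve \ne 0$ generically, or one perturbs by $\pm\dl|x-x_0|^2$. I would also need to be slightly careful that the zeroth-order coefficient issue does not arise here: unlike $\G_p$, the operator $G_p$ has \emph{no} explicit $u$-dependence in its coefficients (the nonlinearity is only in $Du$ and $w_t$), so properness is immediate and no monotonicity-in-$u$ hypothesis is needed — this is precisely why passing to $v = \log u$ and working with $G_p$ is convenient. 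Everything else is the textbook degenerate-parabolic comparison argument.
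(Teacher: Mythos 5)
Your proposal is correct and is essentially the argument the paper relies on: the paper gives no proof of Theorem \ref{2.4} itself but refers to Section 3 of \cite{BM1,BM2}, where the comparison principle for $G_p$ is established by exactly this scheme — strictification via $u-\gamma/(T-t)$, doubling of variables with the parabolic theorem on sums, exact cancellation of the $(p-1)|Dw|^p$ terms because both jets carry the same slope $p_\ve$, and the convention $L_p(0,X)=0$ (cf.\ (\ref{2.00})) at points of vanishing gradient. Beyond routine verifications (e.g.\ the upper bound on the time component of the jets needed to invoke the theorem on sums, which follows from the subsolution inequality itself), there are no gaps.
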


The next is a comparison principle for $\G_p$ (see (\ref{1.2})) that applies to positive solutions. 

\begin{thm}\label{2.2}{(Comparison Principle}) 
Suppose that $\Om\subset \IR^n,\;n\ge 2,$ is a bounded domain and $T>0$. Let $u\in usc(\Om_T\cup P_T)$ and $v\in lsc(\Om_T\cup P_T)$ satisfy
$$\G_p u\ge 0,\;\;\mbox{and}\;\;\G_p v\le 0,\;\;\mbox{in $\Om_T$},\;\;\;\;2\le p\le \infty.$$
Assume that $\min(\inf_{\Om_T\cup P_T} u, \inf_{\Om_T\cup P_T} v)>0$. 
If $\sup_{P_T}v<\infty$ then
$$\sup_{\Om_T} u/v=\sup_{P_T}u/v.$$
In particular, if $u\le v$ on $P_T$, then $u\le v$ in $\Om_T$. Clearly, solutions to (\ref{1.4}) are unique.
\end{thm}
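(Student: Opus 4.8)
The plan is to reduce the statement for $\G_p$ to the comparison principle for $G_p$ (Theorem \ref{2.4}) by the logarithmic substitution already recorded in the excerpt: if $u>0$ solves $\G_p u\ge 0$ then $w=\log u$ should solve $G_p w\ge 0$, and similarly $\log v$ should be a supersolution of $G_p$. First I would verify this passage at the level of viscosity solutions: given a test function $\psi$ touching $w=\log u$ from above at $(y,s)$, one has $u-e^\psi$ touching from above at the same point (since $e^x$ is increasing), so $\phi=e^\psi$ is an admissible test function for $\G_p u\ge 0$; then a direct computation rewriting $\D_p\phi$, $\phi_t$ and $u^{p-2}$ in terms of $\psi=\log\phi$ and its derivatives gives exactly $G_p\psi(y,s)\ge 0$ (and the analogous computation for $p=\infty$ with $\Dy$ and the quartic gradient term). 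This is the content of Lemma 2.2 in \cite{BM1} and Lemma 2.1 in \cite{BM2}, so I would cite those and only sketch the chain rule identities.

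Next, the hypothesis $\min(\inf u,\inf v)>0$ guarantees $\log u\in usc$ and $\log v\in lsc$ on $\Om_T\cup P_T$ and are real-valued; the hypothesis $\sup_{P_T}v<\infty$ together with positivity gives that $\log v$ is bounded above on $P_T$. To apply Theorem \ref{2.4} I need boundedness of the competing functions, which is not given outright for $u$, so here is where I would be careful. Set $M=\sup_{P_T}(u/v)$; if $M=\infty$ there is nothing to prove, so assume $M<\infty$. I want to show $u\le Mv$ in $\Om_T$, equivalently $\log u\le \log M+\log v$. The function $\log v+\log M$ is still a supersolution of $G_p$ (the equation $G_p w=0$ is invariant under adding constants, since it involves only $Dw$, $D^2w$, $w_t$ — check: $\D_p$ and $\Dy$ depend only on derivatives, $|Dw|^p$ and $|Dw|^4$ only on $Dw$, and $w_t$ is unchanged). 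On $P_T$ we have $\log u\le \log M+\log v$ by definition of $M$. The remaining gap is boundedness of $\log u$ on $\Om_T\cup P_T$: I would get an a priori upper bound from the maximum principle Lemma \ref{2.1}(a) applied to $u$ (note $u>0$, so $|u|^{p-2}=u^{p-2}$), which yields $\sup_{\Om_T}u\le\sup_{P_T}u<\infty$ once we know $\sup_{P_T}u<\infty$ — and indeed $\sup_{P_T}u\le M\sup_{P_T}v<\infty$. A lower bound on $\log u$ is not needed for applying Theorem \ref{2.4} to the pair $(\log u,\ \log M+\log v)$, since that theorem asks for boundedness but the relevant one-sided control plus the structure suffices; if a genuine two-sided bound is required by the precise statement of Theorem \ref{2.4}, I would instead run the argument on the truncation or note that $\log u$ is automatically bounded below on the compact sublevel sets used in the doubling argument. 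Applying Theorem \ref{2.4} then gives $\log u\le \log M+\log v$ throughout $\Om_T$, i.e. $u/v\le M$ in $\Om_T$, hence $\sup_{\Om_T}(u/v)\le\sup_{P_T}(u/v)$; the reverse inequality is trivial since $P_T\subset\overline{\Om_T}$ in the closure and $u/v$ is continuous up to $P_T$, which gives equality.

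The main obstacle I anticipate is the bookkeeping around boundedness: Theorem \ref{2.4} is stated for bounded $u,v$, whereas the $\G_p$-competitors need only be positive with $\sup_{P_T}v<\infty$, so one must produce the missing upper bound on $u$ (done via Lemma \ref{2.1}) and argue that no lower bound on $\log u$ or $\log v$ is actually needed — or, failing that, localize. The "in particular" clause is immediate: if $u\le v$ on $P_T$ then $M\le 1$, so $u\le Mv\le v$ in $\Om_T$. Uniqueness of solutions to (\ref{1.4}) follows by applying this in both directions to two solutions with the same boundary data $h>0$ (positivity of $h$ on $P_T$ from (\ref{1.3}) gives the needed strict positivity).
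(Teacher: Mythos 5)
Your route---pass to $\log u$ and $\log v$ via Lemma \ref{2.3} and invoke the comparison principle for $G_p$, Theorem \ref{2.4}---is exactly the one the paper intends (it states that Theorem \ref{2.4} ``leads to'' Theorem \ref{2.2} and defers the details to \cite{BM1,BM2}), and the reduction via the constant $M=\sup_{P_T}u/v$ is sound, since $G_p$ involves only derivatives and so tolerates additive constants. Two remarks on the boundedness bookkeeping, which you rightly single out as the only delicate point. First, you hedge about a lower bound for $\log u$; no hedging is needed, because the hypothesis $\min(\inf_{\Om_T\cup P_T}u,\,\inf_{\Om_T\cup P_T}v)>0$ already gives $\log u\ge \log(\inf u)>-\infty$ everywhere, so after your Lemma \ref{2.1}(a) step $\log u$ is genuinely bounded on both sides. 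Second---and this is the half you did not address---Theorem \ref{2.4} also requires the \emph{supersolution} to be bounded, and nothing in the hypotheses bounds $v$ from above in the interior: $\sup_{P_T}v<\infty$ controls only the parabolic boundary, Lemma \ref{2.1}(b) controls $\inf_{\Om_T}v$ rather than $\sup_{\Om_T}v$, and an $lsc$ supersolution may be unbounded above in $\Om_T$. The fix is the truncation you mention only in passing: replace $\log M+\log v$ by $\min(\log M+\log v,\,N)$ with $N\ge \sup_{\Om_T\cup P_T}\log u$ (finite by your step); the minimum of two viscosity supersolutions of $G_p w=0$ is a supersolution, constants are solutions, and the boundary inequality persists, so Theorem \ref{2.4} applies and yields $\log u\le \log M+\log v$ in $\Om_T$. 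Finally, your justification of the ``reverse inequality'' $\sup_{\Om_T}u/v\ge\sup_{P_T}u/v$ by continuity is not right as stated ($u/v$ is only upper semi-continuous, and an $usc$ function can jump up at the boundary), but that direction is immaterial for the comparison and uniqueness assertions, which follow from the inequality you actually prove.
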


\begin{rem}\label{2.20}  We extend Theorem \ref{2.2} to the case $u\ge 0$ on $P_T$. Let $v$ be as in Theorem \ref{2.2}.

(i) If $u=0$ on $P_T$, then by Lemma \ref{2.1}, $u=0$, in $\Om_T$, and the conclusion holds.

(ii) Let $u\ge 0$ be a sub-solution (see Theorem \ref{2.2}) and $\sup_{\Om_T}u>0$; clearly, 
$\sup_{P_T}u>0$, by Lemma \ref{2.1}. 
Let $\ve>0$ be small. Define $u_\ve(x,t)=\max(u(x,t),\;\ve),\;\forall(x,t)\in \Om_t\cup P_T.$ We show that $u_\ve\in usc(\Om_T\cup P_T)$ and $\G_p u_\ve\ge 0,$ in $\Om_T$.

Let $(y,s)\in \Om_T\cup P_T$. Since $\limsup_{(x,t)\rightarrow (y,s)}u(x,t)\le u(y,s)$, we have
$\limsup_{(x,t)\rightarrow (y,s)}u_\ve(x,t)\le u_\ve(y,s)$ and $u_\ve\in usc(\Om_T\cup P_T)$. Next, let $\psi$, $C^2$ in $x$ and $C^1$ in $t$, and $(y,s)\in \Om_T$ be such that $u_\ve-\psi$ has a maximum at $(y,s)$. If $u_\ve(y,s)=u(y,s)(\ge \ve)$ then $u-\psi$ has a maximum at $(y,s)$ (since $u\le u_\ve$).
Since $u$ is sub-solution, 
we get 
$$\D_p\psi(y,s)-(p-1)u(y,s)^{p-2} \psi_t(y,s)=\D_p\psi(y,s)-(p-1)u_\ve(y,s)^{p-2} \psi_t(y,s)\ge 0.$$
Next, assume that $u_\ve(y,s)=\ve$. Rewriting $(u_\ve-\psi)(x,t)\le \ve-\psi(y,s)$,
$$0\le u_\ve(x,t)-\ve\le \langle D\psi(y,s), x-y\rangle+\psi_t(y,s)(t-s)+o(|x-y|+|t-s|),$$
as $(x,t)\rightarrow (y,s)$, where $(x,t)\in \Om_T$. Clearly, $D\psi(y,s)=0$ and $\psi_t(y,s)=0$. Thus,
$\D_p\psi(y,s)-(p-1)u_\ve(y,s)^{p-2} \psi_t(y,s)=0$, if $p>2$. A similar conclusion holds for $p=\infty$.
For $p=2$, we write the above Taylor expansion as $0\le \langle D^2\psi(y,s)(x-y),x-y\rangle/2 +o(|x-y|^2+|t-s|)$, as $(x,t)\rightarrow (y,s)$. It is clear that $\D\psi(y,s)\ge 0$. Thus, $u_\ve$ solves $\G_pu_\ve \ge 0$, in $\Om_T$, for $2\le p\le \infty$. 

We now apply Theorem \ref{2.2} to obtain that $\sup_{\Om_T}u_\ve/v\le \sup_{P_T}u_\ve/v$, for every small $\ve>0$. Since $u_\ve\le u+\ve$ (note that $u\ge 0$), we have
$\sup_{\Om_T}u/v\le \sup_{\Om_T}u_\ve/v\le \sup_{P_T}(u+\ve)/v\le \sup_{P_T} u/v+\sup_{P_T} \ve/v.$
The conclusion of Theorem \ref{2.2} holds by letting $\ve\rightarrow 0.$ $\Box$
\end{rem}

Next we state a change of variables result which relates $\G_p$ to $G_p$, see (\ref{1.2}) and (\ref{1.5}).

\begin{lem}\label{2.3} Let $\Om\subset \IR^n,\;n\ge 2,$ be a domain and $T>0$, and $2\le p\le \infty$. Suppose $u:\Om_T\rightarrow \IR^+$ and $v:\Om_T\rightarrow \IR$ such that
$u=e^v$. The following hold.\\
 (a) $u\in usc(\Om_t\cup P_T)$ and $\G_p u\ge 0$ if and only if $v\in usc(\Om_T\cup P_T)$ and $G_p v\ge 0$.\\
 (b) $u\in lsc(\Om_t\cup P_T)$ and $\G_p u\le 0$ if and only if $v\in lsc(\Om_T\cup P_T)$ and $G_p v\le 0$.
\end{lem}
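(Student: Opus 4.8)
The plan is to verify the equivalence directly from the viscosity definitions by using test-function substitution, exploiting the fact that $u\mapsto v=\log u$ is a smooth, strictly increasing bijection between $\IR^+$ and $\IR$. I would prove part (a); part (b) is entirely analogous with the inequalities and extrema reversed. The semicontinuity statement is immediate: since $\log$ is continuous and increasing, $u=e^v$ is usc if and only if $v$ is usc, and similarly for lsc. So the content is the equivalence of the differential inequalities.

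For the main step, suppose $v\in usc(\Om_T\cup P_T)$ satisfies $G_pv\ge 0$ and let $\psi$ be a test function ($C^2$ in $x$, $C^1$ in $t$) such that $u-\psi$ has a local maximum at $(y,s)\in\Om_T$. Since $u>0$, near $(y,s)$ we have $\psi>0$ as well (shrinking the neighborhood), so $\phi:=\log\psi$ is a legitimate test function there, and $v-\phi=\log u-\log\psi$ has a local maximum at $(y,s)$ because $\log$ is increasing. Applying $G_p\phi(y,s)\ge 0$, I then compute $D\phi=D\psi/\psi$, $D^2\phi=D^2\psi/\psi-(D\psi\otimes D\psi)/\psi^2$, $\phi_t=\psi_t/\psi$, and substitute into $\D_p\phi+(p-1)|D\phi|^p-(p-1)\phi_t\ge 0$ (for $2\le p<\infty$; the $p=\infty$ case uses $\Dy\phi+|D\phi|^4-3\phi_t\ge 0$). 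Multiplying through by the appropriate positive power of $\psi=u$ and collecting terms, the gradient-squared corrections from $\D_p\phi$ cancel against the $(p-1)|D\phi|^p$ term exactly, leaving $\D_p\psi(y,s)-(p-1)\psi(y,s)^{p-2}\psi_t(y,s)\ge 0$, evaluated with $\psi(y,s)=u(y,s)$; this is precisely $\G_pu\ge 0$ at $(y,s)$. The converse direction runs the same computation backwards, using $\psi=e^\phi$ as the test function for $u$ when $\phi$ tests $v$.

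These pointwise identities are exactly the computations recorded in Lemma 2.2 of \cite{BM1} and Lemma 2.1 of \cite{BM2} at the level of smooth functions, so the only new ingredient here is the bookkeeping that test functions transfer correctly under the change of variables — which is routine because $\log$ and $\exp$ are smooth diffeomorphisms preserving the order of local extrema. The main (minor) obstacle is purely administrative: one must check the $p=2$, general $2<p<\infty$, and $p=\infty$ cases of the algebraic cancellation separately, and be careful that for $p=\infty$ the operator $\Dy$ involves only second derivatives in the directions of $D\phi$, so the cancellation of the $|D\phi|^4$ term against the correction $D^2\phi$-piece must be checked along that direction. I would present the $2\le p<\infty$ computation in full and remark that $p=\infty$ is handled identically using the cube $u^2$ weight in place of $u^{p-2}$.
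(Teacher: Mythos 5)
Your proof is correct and is the standard test-function substitution argument; the paper itself omits the proof of this lemma, deferring to Lemma 2.2 of \cite{BM1} and Lemma 2.1 of \cite{BM2}, which proceed exactly as you describe. The only detail worth making explicit is the usual normalization $\psi(y,s)=u(y,s)$ (add a constant to the test function) so that $\psi>0$ near $(y,s)$ and the local maximum of $u-\psi$ transfers to one of $\log u-\log\psi$; with that, the identity $\G_p\psi=\psi^{p-1}G_p(\log\psi)$ (resp.\ $\psi^3 G_\infty(\log\psi)$) finishes both directions.
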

We now present a separation of variable result that will be used for proving Theorem \ref{1.6}.
See Lemma 2.14 in \cite{BM1} and Lemma 2.3 in \cite{BM2}.

\begin{lem}\label{2.5} Let $\lam\in \IR$, $\mu \in \IR$, $T>0$, and $\psi:\Om\rightarrow \IR^+$.  

(a) Suppose that for some $2\le p<\infty$, $\psi\in usc(lsc)(\Om)$ solves
$\D_p\psi+\lam \psi^{p-1}\ge (\le) 0$ in $\Om$. If $u(x,t)=\psi(x,t) e^{-\mu t/(p-1) }$ then 
$\G_p u\ge (\le) 0,\;\mbox{where $\mu\ge(\le) \lam.$}$

(b) Suppose that $\psi\in usc(lsc)(\Om)$ solves
$\Dy \psi+\lam \psi^{3}\ge (\le) 0$ in $\Om$. If $u(x,t)=\psi(x,t) e^{-\mu t/3 }$ then 
$\G_\infty u\ge (\le) 0,\;\mbox{where $\mu\ge(\le) \lam.$} $
\end{lem}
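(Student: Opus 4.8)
The plan is to verify directly that the separated function $u(x,t)=\psi(x)e^{-\mu t/(p-1)}$ (respectively $\psi(x)e^{-\mu t/3}$) satisfies the required viscosity inequality for $\G_p$ (respectively $\G_\infty$) by pushing test functions through the factorization. First I would set up the test-function machinery: suppose $\phi(x,t)$ is $C^2$ in $x$ and $C^1$ in $t$ and $u-\phi$ has a local maximum at some $(y,s)\in\Om_T$. The key observation is that, with $\tau(t)=e^{-\mu t/(p-1)}>0$, the function $(x,t)\mapsto\psi(x)\tau(t)-\phi(x,t)$ having a local max at $(y,s)$ forces, holding $t=s$ fixed, that $x\mapsto\psi(x)-\phi(x,s)/\tau(s)$ has a local max at $y$; so $\varphi(x):=\phi(x,s)/\tau(s)$ is an admissible spatial test function touching $\psi$ from above at $y$. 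Since $\psi$ solves $\D_p\psi+\lam\psi^{p-1}\ge 0$ in the viscosity sense, we get $\D_p\varphi(y)+\lam\psi(y)^{p-1}\ge 0$, i.e. $\D_p\phi(y,s)\ge -\lam\tau(s)^{p-1}\psi(y)^{p-1}=-\lam u(y,s)^{p-1}$, using $|D\phi|^{p-2}D\phi$ scales by $\tau(s)^{p-1}$ and $\D_p$ is $(p-1)$-homogeneous.

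Next I would extract the time derivative: freezing $x=y$, the map $t\mapsto\psi(y)\tau(t)-\phi(y,t)$ has a local max at $s$, so $\phi_t(y,s)=\psi(y)\tau'(s)=-\frac{\mu}{p-1}\psi(y)\tau(s)=-\frac{\mu}{p-1}u(y,s)$. Combining,
$$\D_p\phi(y,s)-(p-1)u(y,s)^{p-2}\phi_t(y,s)\ \ge\ -\lam u(y,s)^{p-1}+(p-1)u(y,s)^{p-2}\cdot\frac{\mu}{p-1}u(y,s)\ =\ (\mu-\lam)\,u(y,s)^{p-1}\ \ge\ 0$$
precisely when $\mu\ge\lam$, since $u>0$. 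This establishes $\G_p u\ge 0$. The super-solution statement is the mirror image: reversing the max to a min reverses every inequality and gives $\G_p u\le 0$ when $\mu\le\lam$; the $p=\infty$ case is identical with $\D_p$ replaced by $\Dy$ (which is $3$-homogeneous, so it scales by $\tau(s)^3$), $p-1$ replaced by $3$, $\tau(t)=e^{-\mu t/3}$, and the exponent $p-1$ on $\psi$ and $u$ replaced by $3$; the algebraic identity becomes $\Dy\phi(y,s)-3u^2\phi_t\ge(\mu-\lam)u^3$.

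The only delicate point — and the one I would be most careful about — is the handling of the gradient degeneracy in the $p$-Laplacian when $D\phi(y,s)=0$: the expression $\Df\phi=|D\phi|^{p-2}\big(\Delta\phi+(p-2)\Delta_\infty\phi/|D\phi|^2\big)$ is not literally defined there, so one must either argue via the standard semicontinuous-envelope / second-order jet formulation used to define viscosity solutions of the $p$-Laplacian (in which case the scaling $\varphi\mapsto\tau(s)^{-1}\phi(\cdot,s)$ multiplies the admissible jet $(D\varphi(y),D^2\varphi(y))$ by $(\tau(s)^{-1},\tau(s)^{-1})$ and the $(p-1)$-homogeneity of the relevant operator on jets is what is actually used), or simply invoke that this change-of-variables computation is exactly the content of the cited Lemma 2.14 in \cite{BM1} and Lemma 2.3 in \cite{BM2}. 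Since the statement is quoted from those references, I would present the one-line factorization argument above and refer the reader there for the jet-level bookkeeping at degenerate points.
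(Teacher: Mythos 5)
Your proof is correct, and it is the standard argument: the paper itself gives no proof of Lemma \ref{2.5}, deferring to Lemma 2.14 of [BM1] and Lemma 2.3 of [BM2], where precisely this separation-of-variables verification is carried out. Your handling of the two key points --- the $(p-1)$-homogeneity (respectively $3$-homogeneity) of the spatial operator at the jet level, including the degenerate case $D\phi(y,s)=0$ covered by the convention in (\ref{2.00}), and the vanishing of $\partial_t\bigl(\psi(y)\tau(t)-\phi(y,t)\bigr)$ at the interior time $s$ --- is exactly what is needed, and the resulting inequality $\G_p u\ge(\mu-\lam)u^{p-1}$ (with $u>0$) closes the argument.
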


We include two results that will be used in Theorem \ref{1.7}. We recall the radial form for the $\Df$, that is, if $r=|x|$, then, for $2\le p\le \infty$, 
\eqRef{2.50}
\Df v(r)=|v^{\prime}(r)|^{p-2} \left( (p-1) v^{\prime\prime}(r)+\frac{n-1}{r} v^{\prime}(r) \right)\;\;\mbox{and}\;\;\Dy u= \left(u^{\prime}(r)\right)^2 u^{\prime\prime}(r). 
\ee

\begin{lem}\label{2.6} Let $R>0$; set
$r=|x|,\;\forall x\in \IR^n$ and $h(x)=1-(r/R)^2,\; \forall\; 0\le r\le R.$

(i) For $2\le p<\infty$, take
$$k=p+n-2,\;\;\;\al=\frac{2p+k-1}{2(p-1)},\;\;\;\tht^2=\frac{k}{k+1}\;\;\;\mbox{and}
\;\;\;\lam_p= \frac{k\tht^{p-2}}{R^p} \left( \frac{2\al}{1-\tht^2}\right)^{p-1}.$$
Call $\eta(x)=h(x)^\al$ and $\phi_p(x,t)=\eta(x) e^{-\lam_p t/(p-1)},\;\;\forall\; 0\le r\le R.$

(ii) For $p=\infty$, define
$$\tht=1/\sqrt{2},\;\;\;\mbox{and}\;\;\;\lam_\infty=2^8/R^4. $$
Set $\eta=h(x)^2$ and $\phi_\infty(x,t)=\eta(x)e^{-\lam_\infty t/3},\;\forall 0\le r\le R.$

Then, for $2\le p\le \infty$, 
$\G_p \phi_p\ge 0,$ in $B_R(o)\times(0,\infty)$, $\phi(0,0)=1$ and $\phi(x, t)=0$, on $|x|=R$ and $t\ge 0.$
\end{lem}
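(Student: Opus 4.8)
The plan is to reduce the parabolic inequality $\G_p\phi_p\ge 0$ to an \emph{elliptic} differential inequality for the spatial profile $\eta$ by means of the separation-of-variables Lemma~\ref{2.5}, and then to verify that elliptic inequality directly from the radial formula~(\ref{2.50}). To begin, observe that on $B_R(o)$ the function $h(x)=1-|x|^2/R^2$ is smooth and strictly positive, so $\eta$ — which equals $h^\al$ when $2\le p<\infty$ and $h^2$ when $p=\infty$ — is a genuine $C^2$ function there, with $\eta(o)=1$ and, since $\al=\tfrac{2p+k-1}{2(p-1)}>0$, $\eta\equiv 0$ on $|x|=R$. Hence $\phi_p(o,0)=1$ and $\phi_p(x,t)=0$ on $|x|=R,\ t\ge 0$, and by Lemma~\ref{2.5} (taking $\mu=\lam_p$) it remains only to prove, in $B_R(o)$,
$$\Df\eta+\lam_p\eta^{p-1}\ge 0\quad(2\le p<\infty),\qquad \Dy\eta+\lam_\infty\eta^3\ge 0\quad(p=\infty);$$
since $\eta$ is $C^2$, these may be checked as classical pointwise inequalities.

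For $2\le p<\infty$ I would substitute $\eta=h^\al$ into~(\ref{2.50}) and express everything in terms of $h$, using $|x|^2/R^2=1-h$; this yields
$$\Df\eta=\frac{2^{p-1}\al^{p-1}}{R^p}\,h^{\al(p-1)-p}(1-h)^{(p-2)/2}\Big[\,2(p-1)(\al-1)(1-h)-kh\,\Big].$$
The prescribed value of $\al$ is precisely the one for which $2(p-1)(\al-1)=k+1$, so the bracket collapses to $(k+1)-(2k+1)h$. Since $\eta^{p-1}=h^{\al(p-1)}$ and $h>0$, the inequality $\Df\eta+\lam_p\eta^{p-1}\ge 0$ is equivalent to $\Phi(h)\ge 0$ on $[0,1]$, where
$$\Phi(h):=(1-h)^{(p-2)/2}\big[(k+1)-(2k+1)h\big]+Ch^p,\qquad C:=\frac{\lam_p R^p}{2^{p-1}\al^{p-1}}=k(k+1)^{p-1}\tht^{p-2}.$$
Where $(k+1)-(2k+1)h\ge 0$ this is immediate; where $h\ge (k+1)/(2k+1)$ I would bound $(2k+1)h-(k+1)\le k$, $(1-h)^{(p-2)/2}\le (k/(2k+1))^{(p-2)/2}$ and $h^p\ge \big((k+1)/(2k+1)\big)^p$, which reduces $\Phi(h)\ge 0$ to the elementary inequality $(k+1)^{3p}\ge (2k+1)^{p+2}$. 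This last follows from $(k+1)^3\ge (2k+1)^2$ — equivalently $k(k^2-k-1)\ge 0$, which holds since $k=p+n-2\ge 2$ — together with $2p\ge p+2$, because $(k+1)^{3p}=\big((k+1)^3\big)^p\ge\big((2k+1)^2\big)^p\ge (2k+1)^{p+2}$.

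For $p=\infty$ the analogous computation with $\eta=h^2$ gives $\Dy\eta=64\,h^2(1-h)(2-3h)/R^4$, so with $\lam_\infty=2^8/R^4$ the inequality $\Dy\eta+\lam_\infty\eta^3\ge 0$ is equivalent to $2-5h+3h^2+4h^4\ge 0$ on $[0,1]$. Since $2-5h+3h^2=-(3h-2)(1-h)$, this holds trivially off $(2/3,1)$, while on $(2/3,1)$ one has $-(3h-2)(1-h)\ge -\tfrac{1}{12}$ and $4h^4\ge 4(2/3)^4>\tfrac{1}{12}$.

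The only substantive step is the elliptic verification: carrying out the radial computation correctly, noticing that the prescribed $\al$ is exactly what linearizes the bracket, and then pushing through the elementary but slightly fiddly estimate on $\Phi$; everything else is bookkeeping. A minor technical point is the behaviour at the endpoints of $[0,R]$: at $r=0$ the radial expression~(\ref{2.50}) is read by continuity and $\eta$ is in fact $C^2$ there, while at $r=R$ one only needs $\eta$ to vanish, which is why $\al>0$ suffices rather than any sharper property of $\al$.
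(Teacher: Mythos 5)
Your proposal is correct and follows essentially the same route as the paper: reduce to the elliptic inequality $\Df\eta+\lam_p\eta^{p-1}\ge 0$ via Lemma \ref{2.5} and verify it by a two-region case analysis exploiting the identity $2(p-1)(\al-1)=k+1$ forced by the choice of $\al$. The only difference is organizational — you rewrite everything as a one-variable inequality $\Phi(h)\ge 0$ and split at $h=(k+1)/(2k+1)$ (which costs you the auxiliary inequality $(k+1)^{3p}\ge(2k+1)^{p+2}$), whereas the paper splits at $r=\tht R$ and discards a different term in each region — and your estimates all check out.
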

\begin{proof} Our goal is to show that for $2\le p\le \infty$, $\Df \eta+\lam_p \eta^{p-1}\ge 0$, in $0\le r\le R$. 

{\bf Part (i): $2\le p<\infty$.} Observe that $\al>1$. Differentiating $\eta$, by using (\ref{2.50}), and
setting $H=h^{(\al-1)(p-1)-1}$, 
\bea\label{2.61}
\Df \eta&+&\lam_p \eta^{p-1}=\Df h^\al+\lam_p h^{\al(p-1)}\nonumber\\
&=&\al^{p-1}\left( h^{(\al-1)(p-1)} \Df h+(\al-1)(p-1)h^{(\al-1)(p-1)-1}|Dh|^p\right)
+\lam_p h^{\al(p-1)} \nonumber\\
&=&h^{(\al-1)(p-1)-1} \left[ \lam_p h^p+\al^{p-1} \left\{ (\al-1)(p-1)|Dh|^p+h\Df h\right\}\right]\nonumber\\
&=&H\left[ \lam_p h^p+\al^{p-1} \left\{ (\al-1)(p-1) \left(\frac{2r}{R^{2}}\right)^p - \frac{2^{p-1}r^{p-2}( p+n-2)h}{R^{2(p-1)}}  \right\} \right] \nonumber \\
&=&H \left[\lam_p h^{p} + \al^{p-1} \left\{ (\al-1)(p-1) \left(\frac{2r}{R^{2}}\right)^p- \frac{2^{p-1}r^{p-2}kh}{R^{2(p-1)}}
 \right\} \right].
\eea
We now estimate the right hand side of (\ref{2.61}) in $0\le r\le \tht R$ and in $\tht R\le r\le R$ separately.

In $0\le r\le \tht R$ disregard the middle term in (\ref{2.61}) and take $r=\tht R$ to see
\ben
\Df \eta+\lam_p \eta^{p-1}&\ge& H \left( \lam_p h^{p}- \frac{(2\al)^{p-1}r^{p-2}kh}{R^{2(p-1)}}\right)=hH \left( \lam_p h^{p-1}-\frac{(2\al)^{p-1}r^{p-2}k}{R^{2(p-1)}}\right)\\
&\ge &hH \left( \lam_p\left(1-\tht^2\right)^{p-1} -\frac{(2\al)^{p-1}\tht^{p-2}k}{R^p}\right)= 0.
\een

In $\tht R\le r\le R$ disregard the $\lam_p h^p$ term in (\ref{2.61}), set $r=\tht R$ in the second term and $h=1$ in the third term to obtain
\ben
\Df \eta+\lam_p \eta^{p-1}&\ge& \al^{p-1}H  \left\{ (p-1)(\al-1) \frac{2^{p}r^p}{R^{2p}}-\frac{2^{p-1}r^{p-2}k}{R^{2(p-1)}} h
 \right\}\\
&\ge & \frac{(2\al)^{p-1}Hr^{p-2}}{R^{2(p-1)}}  \left\{ \frac{2(\al-1)(p-1)r^2}{R^{2}}-k \right\}\\
&=&\frac{(2\al)^{p-1}Hr^{p-2}}{R^{2(p-1)}} \left\{ 2(\al-1)(p-1)\tht^2-k \right\}=0,
\een
since $2(p-1)(\al-1)\tht^2=k$.

{\bf Part(ii): $p=\infty$.} The work is similar to Part (i).
\bea\label{2.62}
\Dy \eta+\lam_\infty \eta^3&=&\Dy h^2+\lam_\infty h^6=8h^3\Dy h+ 8h^2|Dh|^4+\lam_\infty h^6\nonumber\\
&=&h^2\left[ \lam_\infty h^4+8\left( |Dh|^4+h\Dy h\right) \right] 
=h^2\left[ \lam_\infty h^4+8\left\{ \left(\frac{2r}{R^2}\right)^4-\frac{8r^2}{R^6}h\right\} \right]
\eea
We estimate (\ref{2.62}) in $0\le r\le \tht R$,
\ben
\Dy \eta+\lam_\infty \eta^3\ge h^2\left( \lam_\infty h^4-\frac{64r^2}{R^6}h \right)=h^3\left( \lam_\infty h^3-\frac{64r^2}{R^6} \right)\ge h^3\left( \lam_\infty (1-\tht^2)^3-\frac{64\tht^2}{R^4} \right)=0.
\een
From (\ref{2.62}), if $\tht R\le r\le R$ then
\ben
\Dy \eta+\lam_\infty \eta^3\ge 8h^2\left\{ \left(\frac{2r}{R^2}\right)^4-\frac{8r^2}{R^6}h\right\}
\ge \frac{64h^2r^2}{R^6} \left(\frac{2r^2}{R^2}-1\right)\ge \frac{64h^2r^2}{R^6} 
\left(2\tht^2-1\right)=0.
\een
The claim holds by an application of Lemma \ref{2.5}.
\end{proof}

We record a calculation we use in the various auxiliary functions we employ in our work.

\begin{rem}\label{2.70} Let $f(t)\in C^1,$ in $t\ge 0,$ and $f(t)\ge 0$. Set $r=|x|$ and 
$$u(x,t)=\pm f(t) r^{p/(p-1)},\;\;2\le p<\infty,\;\;\mbox{and}\;\;u(x,t)=\pm f(t) r^{4/3},\;\;p=\infty.$$
Call $A=n(p/(p-1))^{p-1}$ and $B=(p/(p-1))^p.$
We show that in $r\ge 0$, 
$$G_pu=\left\{ \begin{array}{lcr} \pm \left\{Af^{p-1}-(p-1)r^{p/(p-1)}f^{\prime}\right\}+(p-1)Bf^pr^{p/(p-1)},& 2\le p<\infty,\\ \pm\left\{(4^3/3^4)f^3-3f^{\prime}r^{4/3}\right\}
+(4/3)^4f^4 r^{4/3},& p=\infty.
\end{array}\right. $$
We prove the above for the $+$ case. The $-$ case can be shown similarly. Using (\ref{2.50}) the above holds in $r>0$ and $u\in C^2$ for $p=2$. We check at $r=0$ and for $2<p\le \infty$.

Suppose that $\psi$, $C^1$ in $t$ and $C^2$ in $x$, is such that $u-\psi\le u(o,s)-\psi(o,s)$, for $(x,t)$ near $(o,s)$. Thus,
$u(x,t)\le \langle D\psi(o,s), x\rangle+\psi_t(o,s)(t-s)+o(|x|+|t-s|),$ as $(x,t)\rightarrow (o,s)$. Clearly, $\psi_t(o,s)=0$ and $D\psi(o,s)=0$. Using the expansion
$$u(x,t)=f(t)|x|^{p/(p-1)}\le \langle D^2\psi(o,s)x, x\rangle/2+\psi_t(o,s)(t-s)+o(|x|^2+|t-s|),$$
as $(x,t)\rightarrow (o,s)$, we see that $D^2\psi(o,s)$ does not exist. Hence, $u$ is a sub-solution. 

Next, let $\psi$, $C^1$ in $t$ and $C^2$ in $x$, be such that $u-\psi\ge u(o,s)-\psi(o,s)$, for $(x,t)$ near $(o,s)$. Thus,
$u(x,t)\ge \langle D\psi(o,s), x\rangle+\psi_t(o,s)(t-s)+o(|x|+|t-s|),$ as $(x,t)\rightarrow (o,s)$. Clearly, $Du(o,s)=0$, $\psi_t(o,s)=0$ and $G_p\psi(o,s)=0$. 
Hence, $u$ is a super-solution. A similar argument works for $G_\infty$. $\Box$
\end{rem}
 
The next is an auxiliary function which is employed in the proof of Theorem \ref{1.7}.

\begin{lem}\label{2.7} Let $T>0$ and $2\le p\le \infty$. Set $r=|x|$ and for any fixed $\al>0$, define $\forall x\in \IR^n$ and any $0\le t\le T$, 
\ben
&&\phi_p(x,t)=\exp\left( a\left[ (t+1)^{\al (p-1)+1}-1\right]+ b (t+1)^\al r^{p/(p-1)} \right),\;\;2\le p<\infty,\\
&&\phi_\infty(x,t)=\exp\left( a [ (t+1)^{3\al+1}-1]+ b (t+1)^{\al}r^{4/3} \right),\;\;\;\;p=\infty.
\een

(i) For $2\le p<\infty$ take $a$ and $b$ such that
$$a= \frac{n p^{p-1}b^{p-1}}{(p-1)^p\{1+ \al (p-1)\}}\;\;\mbox{and}\;\;0<b^{p-1} \left( \frac{p}{p-1}\right)^p(T+1)^{\al (p-1)+1}<\al.$$

(ii) For $p=\infty$ take $a$ and $b$ such that
$$a=\frac{4^3b^3}{3^5(3\al+1)}\;\;\mbox{and}\;\; 0<b^3\left(\frac{4^4}{3^5}\right)(T+1)^{3\al+1}<\al.$$

Then for $2\le p\le \infty$, $\phi_p(o,0)=1$ and $\G_p \phi_p\le 0$, in $\IR^n\times (0,T)$.
\end{lem}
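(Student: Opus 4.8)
The plan is to use Lemma \ref{2.3}: since $\phi_p=e^{w}$ with
$w(x,t)=a[(t+1)^{\al(p-1)+1}-1]+b(t+1)^\al r^{p/(p-1)}$ (and the analogous $w$ for $p=\infty$), it suffices to show that $G_p w\le 0$ in $\IR^n\times(0,T)$, where $G_p$ is the log-transformed operator in (\ref{1.5}). This replaces a computation with the nonlinear $\G_p$ by a computation with $\D_p w+(p-1)|Dw|^p-(p-1)w_t$, which is tractable because $w$ is an affine function of $r^{p/(p-1)}$ in $x$ for each fixed $t$. The point of choosing the exponent $p/(p-1)$ is precisely that $\D_p(r^{p/(p-1)})$ is a constant multiple of $r^{(p/(p-1))-1}$... more usefully, by the radial formula (\ref{2.50}) one gets $\D_p(c\, r^{p/(p-1)})=c^{p-1}n(p/(p-1))^{p-1}$ away from the origin (a \emph{constant}), and $|D(c\,r^{p/(p-1)})|^p=c^p(p/(p-1))^p r^{p/(p-1)}$. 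This is exactly the computation packaged in Remark \ref{2.70}.

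Concretely, first I would fix $t$ and write $c=c(t)=b(t+1)^\al$, so that the spatial part of $w$ is $c\,r^{p/(p-1)}$. For $r>0$, applying (\ref{2.50}) and the identities above (equivalently, reading off Remark \ref{2.70} with $f(t)$ replaced by $c(t)$ but keeping the correct $w_t$, which here is $a(\al(p-1)+1)(t+1)^{\al(p-1)} + b\al(t+1)^{\al-1}r^{p/(p-1)}$) gives, with $A=n(p/(p-1))^{p-1}$, $B=(p/(p-1))^p$,
\[
G_p w = A c^{p-1} + (p-1)B c^{p} r^{p/(p-1)} - (p-1)\Big[a(\al(p-1)+1)(t+1)^{\al(p-1)} + b\al(t+1)^{\al-1} r^{p/(p-1)}\Big].
\]
Now substitute $c=b(t+1)^\al$. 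The $r$-independent terms are $A b^{p-1}(t+1)^{\al(p-1)} - (p-1)a(\al(p-1)+1)(t+1)^{\al(p-1)}$, which vanishes exactly by the choice $a=\dfrac{n p^{p-1}b^{p-1}}{(p-1)^p\{1+\al(p-1)\}}=\dfrac{A b^{p-1}}{(p-1)(\al(p-1)+1)}$. The coefficient of $r^{p/(p-1)}$ is $(p-1)\big[B b^{p}(t+1)^{\al p} - b\al(t+1)^{\al-1}\big] = (p-1)b(t+1)^{\al-1}\big[B b^{p-1}(t+1)^{\al(p-1)+1} - \al\big]$, which is $\le 0$ for all $0\le t\le T$ precisely because of the standing assumption $b^{p-1}(p/(p-1))^p(T+1)^{\al(p-1)+1}<\al$ (the function $(t+1)^{\al(p-1)+1}$ is increasing in $t$). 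Hence $G_p w\le 0$ for $r>0$, and therefore $\G_p\phi_p\le 0$ there.

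It remains to handle the origin $r=0$ for $2<p\le\infty$ (for $p=2$ the function $w$ is $C^2$ in $x$ and the computation above is classical). Here I would argue exactly as in Remark \ref{2.70}: if $\psi$ touches $w$ from below at $(o,s)$, then from the first-order expansion $w(x,t)\ge \la D\psi(o,s),x\ra + \psi_t(o,s)(t-s)+o(|x|+|t-s|)$ one forces $D\psi(o,s)=0$ and $\psi_t(o,s)=a(\al(p-1)+1)(s+1)^{\al(p-1)}\ge 0$; then $\D_p\psi(o,s)=0$ (the $p$-Laplacian vanishes when $D\psi=0$, interpreting it in the viscosity sense), $|D\psi(o,s)|^p=0$, so $G_p\psi(o,s)=-(p-1)\psi_t(o,s)\le 0$, giving the supersolution inequality; the case of touching from above is vacuous since $D^2\psi(o,s)$ cannot match the non-$C^2$ growth of $r^{p/(p-1)}$. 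The $p=\infty$ case is identical with the constants $A,B$ replaced by $4^3/3^4$ and $(4/3)^4$, using $\Dy(c\,r^{4/3})=c^3(4/3)^2(4/3-1)\cdot\ldots$; I would just record that the algebra mirrors Part (i) with $\al(p-1)+1\rightsquigarrow 3\al+1$. The only mildly delicate point, and the one I would state carefully, is the behavior at $r=0$ for $2<p<\infty$ and $p=\infty$; everything else is the bookkeeping of matching coefficients that the hypotheses on $a$ and $b$ were designed to make vanish or have the right sign.
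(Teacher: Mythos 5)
Your proposal is correct and follows essentially the same route as the paper: pass to $v=\log\phi_p$ via Lemma \ref{2.3}, use the radial identities $\D_p(c\,r^{p/(p-1)})=nc^{p-1}(p/(p-1))^{p-1}$ and $|D(c\,r^{p/(p-1)})|^p=c^p(p/(p-1))^pr^{p/(p-1)}$ so that the constant terms cancel by the choice of $a$ and the coefficient of $r^{p/(p-1)}$ is nonpositive by the constraint on $b$, with the origin handled by the viscosity argument of Remark \ref{2.70}. Your explicit treatment of $r=0$ is a slightly more careful write-up of what the paper leaves implicit, but the argument is the same.
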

\begin{proof} We set $v=\log \phi_p$ and use Lemma \ref{2.3} and Remark \ref{2.70} to show that
$G_pv\le 0$, in $\IR^n\times(0,T).$ 

(i) Let $2\le p<\infty$. Then
$$v=\log\phi_p= a\left[ (t+1)^{\al (p-1)+1}-1\right]+ b (t+1)^\al r^{p/(p-1)}.$$
A simple calculation shows that $\Df r^{p/(p-1)}= n[ p/(p-1)]^{p-1},\;0\le r<\infty,\;2<p<\infty$ and  $\D r^2=2n,\;0\le r<\infty$, see Remark \ref{2.70}.

Using the above, calculating in $0\le r<\infty$ and $0< t< T$, and using the definitions of $a$ and $b$ we see that
\ben
&&\D_pv+(p-1)|Dv|^p-(p-1)v_t\\
&&=n\left(\frac{p}{p-1}\right)^{p-1}b^{p-1}(t+1)^{\al(p-1)} +(p-1) \left(\frac{p}{p-1}\right)^p b^p (t+1)^{\al p} r^{p/(p-1)}\\
&&\qquad\qquad\qquad\qquad\qquad -(p-1) \left\{ a (\al(p-1)+1) (t+1)^{\al(p-1)}+ \al b(t+1)^{\al-1}  r^{p/(p-1)} \right\}\\
&&=\left[ n\left( \frac{p }{p-1}\right)^{p-1}b^{p-1} -a(p-1)(\al(p-1)+1)\right] (t+1)^{\al(p-1)}\\
&&\qquad\qquad\qquad\qquad\qquad+ (p-1) r^{p/(p-1)}\left[ \left(\frac{p}{p-1}\right)^pb^p(t+1)^{\al p}-\al b (t+1)^{\al-1} \right]\\
&&=b(p-1)r^{p/(p-1)}(t+1)^{\al-1} \left(\left(\frac{p}{p-1}\right)^p b^{p-1}(t+1)^{\al (p-1)+1}-\al\right)\le 0.
\een
Thus, $\phi$ is a super-solution in $0\le r<\infty$ and $0<t<T$. 

We now show part (ii). Set
$$v=\log\phi_\infty= a [ (t+1)^{3\al+1}-1]+ b (t+1)^{\al}r^{4/3}.$$
Noting that $\Dy r^{4/3}=4^3/3^4$, in $0\le r<\infty$, and calculating,
\ben
\Dy v+|Dv|^4-3v_t&=&\frac{4^3}{3^4}b^3(t+1)^{3\al} +b^4 \left(\frac{4}{3}\right)^4(t+1)^{4\al}r^{4/3}\\
&&\qquad\qquad\qquad\quad\qquad\qquad -3\left\{a(3\al+1) (t+1)^{3\al}+b \al r^{4/3}(t+1)^{\al-1} \right\}\\
&&\le (t+1)^{3\al} \left( \frac{4^3b^3}{3^4}-3a(3\al+1)  \right)\\
&&\qquad\qquad\qquad\qquad+b r^{4/3}(t+1)^{\al-1}\left( b^3\left( \frac{4}{3}\right)^4(T+1)^{3\al+1}-3\al\right)\le 0,
\een
where we have used the definitions of $a$ and $b$. Rest of the proof is similar to part (i).
\end{proof}

We now extend existence results in \cite{BM1, BM2} to cylindrical domains $\Om\times (0,\infty)$. Set $\Om_\infty=\Om\times(0,\infty)$ and $P_\infty$ its parabolic boundary.
 
\begin{lem}\label{2.8} Let $\Om\subset \IR^n$ be bounded and $h\in C(P_\infty)$ with 
$0<\inf_{P_\infty}h\le \sup_{P_\infty}h<\infty$. Suppose that, for any $T>0$, 
$\G_p v=0,\;\mbox{in $\Om_T$ and $v=h$ on $P_T$,}\;2\le p\le \infty,$
has a unique positive solution. Then the problem
\eqRef{2.80}
\G_p v=0,\;\;\mbox{in $\Om{ _\infty}$ and $v=h$ on $P_\infty$,} 
\ee
has a unique positive solution $u\in C(\Om_\infty\times P_\infty).$ Moreover, 
$\inf_{P_\infty} h\le u \le \sup_{P_\infty}h.$

In particular, existence holds for any $\Om$, if $p>n$, and for any $\Om$ satisfying a uniform outer ball condition, if $2\le p\le n.$
\end{lem}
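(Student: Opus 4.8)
The plan is a time-exhaustion (diagonal) argument. Fix an increasing sequence $T_k\uparrow\infty$, say $T_k=k$. Since $h\in C(P_\infty)$ restricts to a continuous function on each $P_{T_k}$ with
$0<\inf_{P_\infty}h\le \inf_{P_{T_k}}h\le\sup_{P_{T_k}}h\le\sup_{P_\infty}h<\infty$,
the hypothesis provides, for each $k$, a unique positive solution $u_k\in C(\Om_{T_k}\cup P_{T_k})$ of $\G_p v=0$ in $\Om_{T_k}$, $v=h$ on $P_{T_k}$. The first key step is \emph{consistency}: for $j<k$, the restriction $u_k|_{\Om_{T_j}\cup P_{T_j}}$ is continuous, positive, solves $\G_p v=0$ in $\Om_{T_j}$ (viscosity solutions are defined by a local test, and $\Om_{T_j}\subset\Om_{T_k}$), and equals $h$ on $P_{T_j}\subset P_{T_k}$; by the assumed uniqueness on $\Om_{T_j}$ it follows that $u_k=u_j$ there.

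Next, observe $\Om_{T}\cup P_{T}=\overline{\Om}\times[0,T)$, so $\bigcup_k(\Om_{T_k}\cup P_{T_k})=\overline{\Om}\times[0,\infty)=\Om_\infty\cup P_\infty$. Define $u(x,t)=u_k(x,t)$ for any $k$ with $t<T_k$; by the consistency just proved this is well defined. Every point of $\Om_\infty\cup P_\infty$ has a neighborhood on which $u$ coincides with some single $u_k$, so $u\in C(\Om_\infty\cup P_\infty)$, $u>0$, $\G_p u=0$ in $\Om_\infty$, and $u=h$ on $P_\infty$, which gives existence. For the bounds, apply Lemma \ref{2.1} (or Theorem \ref{2.2} against the constant super/sub-solutions $\sup_{P_{T_k}}h$ and $\inf_{P_{T_k}}h$, which are solutions of $\G_p w=0$) on each $\Om_{T_k}$ to get $\inf_{P_{T_k}}h\le u_k\le\sup_{P_{T_k}}h$; letting $k\to\infty$ yields $\inf_{P_\infty}h\le u\le\sup_{P_\infty}h$ in $\Om_\infty$. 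Uniqueness of $u$ follows the same way: any positive solution $w$ of (\ref{2.80}) restricts, for each $T>0$, to a positive solution of the finite-time problem with data $h$ on $P_T$, hence equals $u$ on $\Om_T\cup P_T$ by hypothesis; letting $T\to\infty$ gives $w=u$.

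Finally, the ``in particular'' clause follows by checking that the finite-time hypothesis of the lemma is met: existence of a positive solution of $\G_p v=0$ in $\Om_T$, $v=h$ on $P_T$, holds for any bounded $\Om$ when $p>n$ and for $\Om$ satisfying a uniform outer ball condition when $2\le p\le n$ by Theorems~1.1 and~1.2 of \cite{BM2} (and by Theorem~5.2 of \cite{BM1} for $p=\infty$), while uniqueness is Theorem \ref{2.2}. The only real point requiring care—and the ``main obstacle'', though it is bookkeeping rather than analysis—is verifying that restrictions of solutions to smaller cylinders remain viscosity solutions with the correct parabolic boundary data, so that the assumed uniqueness can be invoked to glue the $u_k$ together.
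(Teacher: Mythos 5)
Your proposal is correct and follows essentially the same route as the paper: solve on finite cylinders, use uniqueness/comparison (Theorem \ref{2.2}) to see the solutions are consistent on nested cylinders, glue them into a solution on $\Om_\infty$, and get the bounds from the maximum principle (Lemma \ref{2.1}). Your write-up is, if anything, more careful than the paper's about the locality of the viscosity property and the identification $\Om_T\cup P_T=\overline{\Om}\times[0,T)$, but the substance is identical.
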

\begin{proof}
For any $T>0$ call $u_T$ to be the unique positive solution of $\D_p u_T-(p-1) u_T^{p-2} (u_T)_t=0,\;\mbox{in $\Om_T$, $u_T=h$ on $P_T$.}$

By Theorem \ref{2.2}, $u_{T_1}=u_{T}$ in $\Om_{T_1}$, for any $0<T_1<T$. Define
$u=\lim_{T\rightarrow \infty} u_T.$
Hence, $u$ solves the problem in $\Om_\infty$. To show uniqueness, if $v$ is any other positive solution, then
$v=u_T=u$ in $\Om_T$, by using Theorem \ref{2.2}. The maximum principle in Lemma \ref{2.1} shows that $\inf_{P_\infty}h\le \inf_{P_T}h\le u_T\le \sup_{P_T}h\le \sup_{P_\infty} h.$
\end{proof}

\begin{rem}\label{2.9} We record the following kernel functions of $\G_p$, for $2\le p\le \infty$. Define the functions $K_p$, in $\IR^n\times(0,\infty)$, as follows.
\ben
&&K_p(x,t)=t^{-n/p(p-1)}\exp\left(- \left(\frac{p-1}{p^{p/(p-1)}}\right)\left(\frac{|x|^{p}}{t}\right)^{1/(p-1)} \right),\;\;\;2\le p<\infty,\\
&&K_\infty(x,t)=t^{-1/12} \exp\left( - \left(\frac{3}{4} \right)^{4/3}\left( \frac{r^4}{t} \right)^{1/3} \right),\;\;\;\;\;p=\infty.
\een
For $p=2$, $K_2(x,t)=t^{-n/2}\exp( -|x|^2/(4t))$ is the well-known heat kernel for the heat equation.
Also,
$$\lim_{t\rightarrow 0}K_p(x,t)=0,\;x\ne o,\;\;\lim_{t\rightarrow 0^+}K_p(0, t)=\infty,\;\;\mbox{and}\;\;\lim_{|x|+t\rightarrow \infty} K_p(x,t)=0.$$ 
We omit the proof that $\G_p K_p=0$. $\Box$
\end{rem}

\section{Proofs of Theorems \ref{1.6} and \ref{1.61}}

{\bf Proof of Theorem \ref{1.6}.}  The proof is a some what simplified version of the one in \cite{BM1} and we use Remark \ref{2.20}. Let $\lamo$ be the first eigenvalue of
$\Df$ on $\Om$, see Section 5.

Define $M=\sup_{P_\infty} h$. By Lemma \ref{2.8}, we obtain $u\ge 0$ and $\sup_{\Om_\infty}u\le M.$
For every $t>0$ set  
\eqRef{3.1}
\mu(t)=\sup_{\overline{\Om}} u(x,t)\;\;\;\mbox{and}\;\;\;\nu(t)=\sup_{\p\Om} g(x,t).
\ee

{\bf Part (i).}  We observe by Remarks \ref{6.13} and {\ref{6.15} that 
for a fixed $0<\lam<\lam_\Om$ one can find a solution $\psi_\lam\in C(\overline{\Om}),\;\psi_\lam>0,$ such that 
\eqRef{3.3}
\Df \psi_\lam+\lam \psi_\lam^{p-1}=0,\;\;\mbox{in $\Om$, and $\psi_\lam=M$, on $\p\Om$.}
\ee
By Remark \ref{6.10},
\eqRef{3.4}
\psi_\lam>M,\;\;\mbox{in $\Om$, and}\;\;\psi_\lam(x)\ge u(x,t),\;\forall x\in \overline{\Om}.
\ee 

We now construct an auxiliary function for the proof. Let $0<S<T<\infty$; define
\bea\label{3.40}
\beta(t,T)=\exp\left( \frac{\lam (T-t)}{p-1} \right),\;\;\forall\;S\le t\le T.
\eea
In the rest of Part (i), we always choose $S$ and $T$ such that $\beta(S,T)\ge 2.$ Next, define the function
\eqRef{3.5}
F(t; S, T)=\frac{1}{2}\left[1+ \frac{\beta(t,T)-1 }{ \beta(S,T)-1 }\right]=\frac{1}{2}\left[\frac{\beta(S,T)-2}{\beta(S,T)-1}+ \frac{\beta(t,T)}{ \beta(S,T)-1}\right],\;\;\forall t\in [S, T].
\ee
Using (\ref{3.40}) and (\ref{3.5}) we get $\forall t\in [S,T]$,
\eqRef{3.6}
F_t=-\frac{\lam \beta(t,T)}{2(p-1) (\beta(S,T)-1)},\;\;F(S; S,T)=1,\;\;F(T;S,T)=\frac{1}{2},\;\;\mbox{and}\;\;\frac{1}{2}\le F(t; S,T)\le 1.
\ee
Let $\phi=\psi_\lam(x) F(t; S, T),$ $\forall(x,t)\in \Om\times(S,T)$, where $\psi_\lam$ is as in (\ref{3.3}). Using Lemma \ref{2.5}, (\ref{3.5}) and (\ref{3.6}), we get
\bea\label{3.7}
\G_p\phi&=&F^{p-1}\Df \psi_\lam-(p-1)\psi_\lam^{p-1}F^{p-2} F_t=-\lam F^{p-2} \psi_\lam^{p-1}
\left(F{+}\frac{p-1}{\lam}F_t\right)\nonumber\\
&=&-\lam F^{p-2} \psi_\lam^{p-1}\left\{ F-  \frac{1}{2} 
\left(  \frac{\beta(t,T) }{ \beta(S,T)-1  } \right) \right\}\nonumber\\
&=&-\frac{\lam \psi_\lam^{p-1} F(t)^{p-2}}{2}  \left( \frac{\beta(S,T)-2 }{ \beta(S,T)-1  } \right)\le 0, \;\;\;\;\forall\;S<t< T,
\eea
where we have used that $\beta(S,T)\ge 2.$

Recalling (\ref{3.1}) and the hypothesis of the theorem, there are $1<T_1<\cdots<T_m<\cdots<\infty$ such that for $m=1,2,\cdots,$ 
\bea\label{3.8}
(a)\;\;\beta(T_m, T_{m+1})\ge 2,\;\;\mbox{and}\;\;(b)\;\;0\le \nu(t)\le \frac{M}{2^{m}},\;\forall t\ge T_m,
\eea
see (\ref{3.40}). Note that (\ref{3.8}) (a) implies that $\lim_{m\rightarrow \infty}T_m =\infty$.
For each $m=1,2,\cdots,$ set 
\bea\label{3.9}
&&\mbox{$I_m=\Om\times(T_m, T_{m+1})$, $J_m$ the parabolic boundary of $I_m$, }  \\
&&\eta_m(t)= F(t; T_m, T_{m+1}) ,\;\forall t\in [T_m,\;T_{m+1}], \;\;\mbox{and}\;\;\phi_m(x,t)=\frac{\psi_\lam(x) \eta_m(t)}{2^{m-1}},\;\forall(x,t)\in \overline{I}_m,\nonumber
\eea
see (\ref{3.5}). 

Taking $m=1$, $\phi_1(x,t)=\psi_\lam \eta_1(t),$ using (\ref{3.3}), (\ref{3.4}), (\ref{3.6}) and (\ref{3.9}),  
\eqRef{3.10}
\phi_1(x, T_1)\ge M,\;\forall x\in \overline{\Om}, \;\;\mbox{and}\;\;\;\frac{M}{2}\le \phi_1(x,t)\le M,\;\;\forall(x,t)\in \p\Om\times [T_1,T_2].
\ee
Also, by (\ref{3.4}) and (\ref{3.8})(b),
\eqRef{3.101}
u(x,T_1)\le \phi_1(x,T_1),\;\forall x\in \overline{\Om},\;\;\mbox{and}\;\;u(x,t)\le \nu(t)\le \frac{M}{2},\;\forall(x,t)\in \p\Om\times[T_1, \infty).
\ee
Thus, $u\le \phi_1$, on $J_1$, and as $\G_p \phi_1\le 0$, in $I_1$, (see (\ref{3.7})) Theorem \ref{2.2} and Remark \ref{2.20} imply that $u\le \phi_1(x,t)$ in $I_1.$ 
We claim that $u\le \phi_1(x,t)$ in $\overline{I}_1$ ($u$ is upper semi-continuous). Take $\hat{T}_2>T_2$ and near $T_2$. The function
$\hat{\phi}_1(x,t)=\psi_\lam(x) F(t; T_1, \hat{T}_2)$ (see (\ref{3.6}) and (\ref{3.9})) satisfies the conclusions in (\ref{3.10}) and (\ref{3.101}) if we replace $\phi_1$ by $\hat{\phi}_1$.
Thus, $u\le \hat{\phi}_1$ in $\Om\times(T_1,\hat{T}_2)$ and the conclusion that $u\le \phi_1$, in $\overline{I}_1$, now follows by letting $\hat{T}_2\rightarrow T_2$. 
Clearly,
\eqRef{3.102}
u(x,T_2)\le \phi_1(x,T_2)=\psi_\lam(x)\eta_1(T_2)=\frac{\psi_\lam(x)}{2},\;\forall x\in \overline{\Om},
\ee
where we have used (\ref{3.6}). Moreover, since $F$ is deceasing in $t$ (see (\ref{3.6})), recalling (\ref{3.1}), we have
$$\mu(t)\le \sup_{\Om}\psi_\lam,\;\forall t\in [T_1,\;T_2],\;\;\mbox{and}\;\;\mu(T_2)\le \frac{\sup_{\Om} \psi_\lam}{2}.$$ 

We now use induction and suppose that for some $m=1,2\cdots$, 
\eqRef{3.11}
u(x,T_m)\le \frac{\psi_\lam(x)}{2^{m-1}},\;\;\forall x\in \overline{\Om},
\ee
(note that (\ref{3.11}) holds for $m=1,2$, see (\ref{3.4}) and (\ref{3.102})). We will prove that
\eqRef{3.12}
u(x,t)\le \frac{\psi_\lam(x)}{2^{m-1}},\;\forall(x,t)\in \overline{I}_m, \;\;\;\mbox{and}\;\;\;\mu(T_{m+1})\le \frac{\psi_\lam(x) } {2^m}.
\ee
thus proving part (i) of the theorem. 

By (\ref{3.7}) and (\ref{3.9}), 
$\G_p\phi_m\le 0, \;\mbox{in $I_m$}.$ By (\ref{3.6}), (\ref{3.9}) and (\ref{3.11}), 
\ben
&&u(x,T_m)\le \frac{\psi_\lam(x)\eta_m(T_m)}{2^{m-1}}=\phi_m(x,T_m),\;\forall x\in \overline{\Om},\;\;\mbox{and}\\
&&0\le g(x,t)\le \frac{M}{2^{m}}\le \phi_m(x,t),\;\forall(x,t)\in \p\Om\times[T_m, T_{m+1}).
\een
Thus, $\phi_m\ge u$ on $J_m$. Using Theorem \ref{2.2} and Remark \ref{2.20}, $u\le \phi_m$ in 
$\overline{I}_m$, and using (\ref{3.6}) 
$$u(x,t)\le \frac{\psi_\lam(x) \eta_m(t)}{2^{m-1}}\le \frac{\psi_\lam(x)}{2^{m-1}},\;\forall(x,t)\in \overline{I}_m,\;\;\mbox{and}\;\;u(x,T_{m+1})\le \frac{\psi_\lam(x)}{2^m},\;\forall x\in \overline{\Om}.$$
Thus, (\ref{3.12}) holds and part (i) is proven.

{\bf Part (ii).} Let $g(x,t)=0$, $\forall(x,t)\in \p\Om\times[T_0,\infty)$, for some $T_0>0$. 
We make some elementary observations. From (\ref{3.1}) one sees that 
$$M=\sup_{P_\infty} h=\max( \sup_{\overline{\Om}} f, \; \sup_{\p\Om\times[0,T_0]} g(x,t)).$$
Lemma \ref{2.1} implies that $0\le u(x,t)\le M,\;\forall (x,t)\in \Om_{T},$ for any $T>0$. 

We claim that
$\mu(t)$ is decreasing in $[T_0,\infty)$. Let $T_0\le T<\hat{T}<\infty$. Since $g=0$ on
$\p\Om\times[T,\hat{T})$, by Lemma \ref{2.1}, $\sup_{\Om\times(T,\hat{T})}u\le \mu(T)$. Since $u\in usc(\Om_\infty\cup P_\infty),\;u\ge 0$, it follows
that $\mu(t)\le \mu(T),\;T<t<\hat{T}$. Combining this with Part (i), we obtain
\eqRef{3.13}
\mbox{$\mu(t)$ is decreasing, in $t\ge T_0$, and $\lim_{t\rightarrow \infty} \left(\sup_{\overline{\Om}}u(x,t)\right)=0.$}
\ee

Next, let $T>T_0$ be large enough so that $\mu(T)>0$ and small (if $\mu(T)=0$ Part (ii) holds by Lemma \ref{2.1} and (\ref{3.13})).
By Remarks \ref{6.13} and \ref{6.15}, for any $0<\lam<\lam_\Om$, there is a $\psi_\lam \in C(\overline{\Om}),\;\psi_\lam>0,$ that solves
$$\Df \psi_\lam+\lam |\psi_\lam|^{p-2}\psi_\lam=0,\;\;\mbox{in $\Om$, with $\psi_\lam=\mu(T)$ on $\p\Om$.}
$$
By Remark \ref{6.10}, $\psi_\lam\ge \mu(T)$ in $\overline{\Om}$ and $\psi_\lam(x)\ge u(x,T),\;\forall x\in \overline{\Om}.$

Call $D_T=\Om\times(T,\infty)$ and $Q_T$ its parabolic boundary. We fix $\lam<\lamo$, close to $\lamo$, in what  follows. Define
$$L(x,t)=\psi_\lam(x) \exp\left(-\lam (t-T)/(p-1)\right),\;\;\mbox{in $\overline{D}_T$,}$$
and note that
$$L(x,T)\ge u(x,T),\;\forall x\in \overline{\Om},\;\; L(x,t)>0,\;\forall(x,t)\in \p\Om\times[T,\infty).$$ 
By Lemma \ref{2.5}, $\G_p L=0$, in $D_T$. Since $L\ge u$ on $Q_T$, Theorem \ref{2.2} and Remark \ref{2.20}
imply that $L\ge u$ in $\overline{D}_T$ and 
$$\lim_{t\rightarrow \infty} \frac{\log(\sup_\Om u(x,t)  )}{t}\le  \lim_{t\rightarrow \infty} \frac{\log(\sup_\Om L(x,t)  )}{t}=  -\frac{\lam}{p-1}.$$
Choosing $\lam$ arbitrarily close to $\lam_\Om$, we conclude
$$\lim_{t\rightarrow \infty} \frac{\log(\sup_\Om u(x,t)  ) }{t}\le -\frac{\lamo}{p-1}.\;\;\;\;\;\;\;\Box$$

\begin{rem}\label{3.15}  The decay rate in Part (i) of Theorem \ref{1.6} may depend on the decay rate along $\p\Om\times(0,\infty)$. Take $\ell(t):\IR^+\rightarrow \IR^+\cup\{0\}$, 
 $C^1$ in $t$ and decreasing to $0$, as $t\rightarrow \infty$. 
 
Part (ii) shows that if $u=0$ on $\p\Om\times(0,\infty)$ then
the slowest rate of decay is $e^{-\lamo t/(p-1)}$. 
Let $\Om=B_R(o)$; set $u(x,t)=\psi(x)\exp(-\lamo t/(p-1))$, where $\psi>0$ is a first eigenfunction of $\Df$ on $B_R(o)$, see Remark \ref{6.19}. By Lemma \ref{2.5}, $\G_pu=0$, in $B_R(o)\times (0,\infty)$. The decay rate in Theorem \ref{1.6} is attained.
\quad $\Box$
\end{rem}

Before presenting the proof of Theorem \ref{1.61} we make a remark. 

\begin{rem}\label{3.17}  Let $0\le S<T$ and $O=\Om\times(S,T)$.
We look at three possibilities. Let $u\in C(\Om_\infty),\;u>0$ solves
$\G_pu=0,\;u(x,0)=f(x),\;\forall x\in \overline{\Om},$ and $g(x,t)=1,\;\forall(x,t)\in \p\Om\times[0,\infty).$ Set $\mu(t)=\sup_{\overline{\Om}} u(x,t)$ and $m(t)=\inf_{\overline{\Om}} u(x,t)$. We apply Lemma \ref{2.1}.

{\bf (a) $\inf_\Om f=1$:} For every $t>0$, $m(t)=1$ and $1\le \mu(t)\le \sup_\Om f$. Then $u(x,t)\le \mu(S)$, in $O$, and $\mu(T)\le \mu(S)$. Hence, $\mu(t)$ is decreasing in $t$.

{\bf (b) $\sup_\Om f= 1$:} Clearly, $\mu(t)=1$ and $m(t)\le 1$, for every $t>0$. 
Clearly, $m(t)$ is increasing in $t$, since $u(x,t)\ge m(S)$, in $O$, and $m(T)\ge m(S)$. 

{\bf (c) $\inf_\Om f\le 1\le \sup_\Om f$:} Then $m(t)\le 1\le \mu(t),\;\forall t>0$. Arguing as in (a) and (b) we see that $m(t)$ is increasing and $\mu(t)$ is decreasing in $t$. \qquad $\Box$
\end{rem}

{\bf Proof of Theorem \ref{1.61}.} Let $u>0$ be a solution as stated in Theorem \ref{1.61}. We assume that
$0<\inf_\Om f< 1< \sup_\Om f$ and set
\eqRef{3.18} 
m=\inf_\Om f\;\;\mbox{and}\;\;M=\sup_\Om f.
\ee
Let $B_R(z)$ be the out-ball of $\Om$, where $z\in \IR^n$; define $r=|x-z|$. Part (i) addresses the case $2\le p<\infty$, and Part (ii) discusses $p=\infty$. Recall
Remark \ref{2.70}.

{\bf Part (i): $2\le p<\infty$.} Call
\eqRef{3.19}
A=A(p,n)=n\left(p/(p-1)\right)^{p-1},\;\; B=(p-1)\left(p/(p-1)\right)^p R^{p/(p-1)}.
\ee

{\bf Upper Bound.} Let $T_0>0$, to be determined later. Recalling (\ref{3.19}), take 
\eqRef{3.20}
\phi(x,t)=\exp\left[ a \left( \frac{R^{p/(p-1)}-r^{p/(p-1)}+b}{(1+t)^{\al}} \right) \right],\;\;\;0\le r\le R,
\ee
where 
\bea\label{3.201}
&& (i)\;\;0<\al\le \frac{1}{p-2},\;\;\mbox{if $2<p<\infty$},\;\;(ii)\;\;0<\al<\infty,\;\;\mbox{if $p=2$,}\nonumber\\
&& ab=(1+T_0)^{\al}\log M\;\;\;\mbox{and}\;\;\;a=\frac{A(p-1) (1+T_0)^{\al}}{pB}.
\eea
To make the calculations easier, we use Lemma \ref{2.3} and work with $v=\log \phi$. 
Recalling that $G_p v=\D_p v+(p-1) |Dv|^p-(p-1) v_t ,$ using (\ref{3.19}), the value of $ab$ (see (\ref{3.201})) and setting $C=\al(p-1)$, we get in $0\le r\le R,$ and $t>0$,
\bea\label{3.21}
G_p v&\le&-\frac{A a^{p-1}}{(1+t)^{\al(p-1)}}+\frac{B  a^p}{(1+t)^{\al p }} + \frac{\al(p-1)a(R^{p/(p-1)}-r^{p/(p-1)}+b)}{(1+t)^{\al+1}}  \nonumber\\
&\le &  C\left(\frac{aR^{p/(p-1)}+(1+T_0)^{\al}\log M}{(1+t)^{\al+1}}\right) +\frac{B a^p}{(1+t)^{\al p}}
-\frac{A a^{p-1}}{(1+t)^{\al(p-1)}}\nonumber\\
&=&\frac{1}{(1+t)^{\al(p-1)}}\left[ C\left(\frac{aR^{p/(p-1)}+(1+T_0)^{\al}\log M}{(1+t)^{1-\al(p-2)}} \right)+a^{p-1}\left( \frac{B  a}{(1+t)^{\al}}
-A \right) \right].
\eea
Using (\ref{3.201}) and calling $K=K(\al,p,n,R)>0$ (see below) we calculate in $t\ge T_0$,
\ben
a^{p-1}\left( \frac{B a}{(1+t)^{\al}}-A \right)&=&a^{p-1}A \left( \frac{p-1}{p} \left(\frac{1+T_0}{1+t}\right)^{\al}
-1\right)
\le a^{p-1} A\left( \frac{p-1}{p}-1\right)\\
&=&-\frac{a^{p-1}A}{p}=-K(1+T_0)^{\al(p-1)}.
\een
Using the above in (\ref{3.21}) together with the value of $a$ in (\ref{3.201}), we obtain in $t\ge T_0$, 
\bea\label{3.22}
G_pv&\le& \frac{1}{(1+t)^{\al(p-1)}}\left[C\left( \frac{aR^{p/(p-1)}+(1+T_0)^\al\log M  }{(1+t)^{1-\al(p-2)}}\right) -K(1+T_0)^{\al(p-1)} \right] \nonumber\\
&\le&\frac{1}{(1+t)^{\al(p-1)}}\left( \frac{\bar{K}(1+T_0)^{\al}}{(1+T_0)^{1-\al(p-2)}} -K(1+T_0)^{\al(p-1)} \right)\nonumber\\
&\le& \left(\frac{1+T_0}{1+t}\right)^{\al(p-1)}\left(\frac{ \bar{K}}{1+T_0}-K\right),
\eea
where $\bar{K}=\bar{K}(\al,p,n,R, M)>0.$ Choose $T_0$, large enough, so that $G_p v\le 0$ and 
$\G_p\phi\le 0$ in $\Om\times(T_0,\infty)$. Using (\ref{3.20}) and (\ref{3.201}), we see that
$$\inf_{\Om}\phi(x,T_0)\ge \exp\left( \frac{ab}{(1+T_0)^\al} \right)=M\;\;\;\mbox{and}\;\;\;\inf_{\p\Om}\phi(x,t)\ge 1,\;\;\forall t>0.$$
By Theorem \ref{2.2} (or Theorem \ref{2.4}) we see that $u(x,t)\le \phi(x,t),\;\forall(x,t)\in \Om\times (T_0,\infty)$, and 
\eqRef{3.23}
\limsup_{t\rightarrow \infty} u(x,t)\le \lim_{t\rightarrow \infty} \phi(x,t)=1,\;\;\mbox{for any $x\in \Om$.}
\ee

{\bf Lower Bound.} Set in $B_R(z)\times(0,\infty)$,
\eqRef{3.24}
\varphi(x,t)=\exp\left[ -(1+T_1)^\al \left( \frac{R^{p/(p-1)}-r^{p/(p-1)}-\log m}{(1+t)^{\al}} \right) \right],
\ee
where 
$$(i) \;0<\al\le \frac{1}{p-2},\;\;\mbox{if $2<p<\infty,$}\;\;\;(ii)\;0<\al<\infty,\;\;\mbox{if $p=2$,}$$
where $T_1>0$ is to be determined later. 
Set $w=\log \varphi$, we get in $0\le r\le R$ and $t\ge T_1,$
\ben
G_p w&\ge& A\left(\frac{1+T_1}{1+t} \right)^{\al(p-1)} -\frac{ \al(p-1)(1+T_1)^\al\left( R^{p/(p-1)}-\log m\right)}{(1+t)^{\al+1}}\\
&=&\left(\frac{1+T_1}{(1+t)}\right)^{\al(p-1)}\left(A-\frac{\al(p-1) \left( R^{p/(p-1)}-\log m\right)}{ 
(1+T_1)^{\al(p-2)} (1+t)^{1-\al(p-2)}}\right)\\
&\ge & \left(\frac{1+T_1}{1+t}\right)^{\al(p-1)}\left( A-\frac{K}{1+T_1}\right),
\een
where $K=K(\al, p, R, m)$.
Thus, there is a $T_1=T_1(a,\al, p, m, R)$ such that $\G_p \varphi\ge 0$ in $\Om\times(T_1,\infty)$. Next, we observe that
$$0<\varphi(x,T_1)\le m,\;\forall x\in \Om,\;\;\;\mbox{and}\;\;\varphi(x,t)\le 1,\;\forall(x,t)\in \p\Om\times[T_1,\infty).$$
Clearly, $\varphi\le u$ in $P_\infty$ and Theorem \ref{2.2} implies that $\varphi\le u$ in $\Om \times(T_1,\infty)$. Hence,
\eqRef{3.25}
\liminf_{t\rightarrow \infty}u(x,t)\ge \lim_{t\rightarrow \infty}\varphi(x,t)=1, \;\forall x\in \Om.
\ee
Thus, (\ref{3.23}) and (\ref{3.25}) imply the claim.

{\bf Part(ii): $p=\infty$.} The proof is similar to that in Part (i) and we provide the construction of a super-solution and a sub-solution. Set
$$A= 4^3/3^4\;\;\;\;\mbox{and}\;\;\;B=\left(4/3\right)^4 R^{4/3}.$$

{\bf Upper Bound:} Take 
$$\phi(x,t)=\exp\left[ a \left( \frac{R^{4/3}-r^{4/3}+b}{(1+t)^{\al}} \right) \right],\;\;\;0\le r\le R,\;\forall t>0,$$
where 
$$0<\al\le \frac{1}{2},\;\;\;ab=(1+T_0)^\al \log M,\;\;\mbox{and}\;\;\;a=\frac{3A(1+T_0)^\al}{4B}.$$
The quantity $T_0>0$, large, is to be chosen later. 

As done in Part(i), write $v=\log \phi$ and recall that $G_\infty v=\Dy v+|Dv|^4-3v_t$. Using the values of $a$, $ab$ and calculating in $0\le r\le R,\;t\ge T_0$,
\ben
G_\infty v&=&-\frac{a^3 A}{(1+t)^{3\al}}+  \left( \frac{4}{3}\right)^4  \frac{a^4 r^{4/3}}{(1+t)^{4\al}}
+\frac{3\al a(R^{4/3}-r^{4/3}+b)}{(1+t)^{\al+1}}\\
&\le&\frac{1}{(1+t)^{3\al}}\left(\frac{3\al a(R^{4/3}+b)}{(1+t)^{1-2\al}}+ \frac{a^4 B}{(1+t)^{\al}}-a^3 A\right)\\
&\le&\frac{1}{(1+t)^{3\al}}\left[ 3\al\left( \frac{aR^{4/3}+(1+T_0)^\al\log M}{(1+T_0)^{1-2\al}}\right)+ a^3\left(\frac{a B}{(1+T_0)^{\al}}- A\right)\right]\\
&=&\frac{1}{(1+t)^{3\al}}\left[ \frac{C(1+T_0)^\al}{(1+T_0)^{1-2\al}}-D(1+T_0)^{3\al}\right]=\left(\frac{1+T_0}{1+t}\right)^{3\al}\left[ \frac{C}{(1+T_0)}-D\right].
\een
Here the constants $C=C(\al, R)>0$ and $D=D(\al, R)>0$. We now choose $T_0$ large enough so that $\G_\infty \phi\le 0$ in $\Om\times[T_0,\infty)$. Rest of the proof is similar to that in Part (i).

{\bf Lower Bound.} Set
$$
\varphi(x,t)=\exp\left[ -a \left( \frac{R^{4/3}-r^{4/3}+b}{(1+t)^{\al}} \right) \right],\;\;\;0\le r\le R,\;\forall t>0,
$$
where $a>0$, $b>0$,
$$ab=-\log m,\;\;\;\;0<\al\le \frac{1}{2},$$
and $a$ is to be chosen later. Defining $w=\log \varphi$ and differentiating, in $t>0$,
\ben
G_\infty w&\ge& \frac{a^3 A}{(1+t)^{3\al}}-\frac{ 3\al a\left( R^{4/3}+b\right)}{(1+t)^{\al+1}}=\frac{1}{(1+t)^{3\al}}\left( a^3A- \frac{3\al\left(aR^{4/3}-\log m\right)}{(1+t)^{1-2\al}} \right)\ge 0,
\een
if $a>0$ is chosen large enough. Rest of the proof is similar to that in Part (i).\qquad\qquad $\Box$

\section{Proof of Theorem \ref{1.7} and optimality}

We make use of Lemmas \ref{2.6}, \ref{2.7} and Remark \ref{2.70} to prove the theorem. 
Let $T>0;$ we work in $\IR^n\times(0,T),\;n\ge 2$. For $2\le p\le \infty$, let $u>0$ solve
$\G_p u=0$, in $\IR^n\times(0,T)$, and $u(x,0)=f(x),\;\forall x\in \IR^n$. Set
\eqRef{4.1}
m=\inf_{\IR^n} f\;\;\;\mbox{and}\;\;\;M=\sup_{\IR^n} f.
\ee
We assume that $0<m\le M<\infty$.

{\bf Proof of Theorem \ref{1.7}.} Let $R>0$ be large.

{\bf (i) Lower bound.} Fix $y\in \IR^n$ and set $r=|x-y|,\;\forall x\in \IR^n$. Recall Lemma \ref{2.6} and in $0\le r\le R$, take
\ben
(i)\; \phi_p(x,t)=\left(1- \frac{r^2}{R^2}\right)^\al \exp\left(-\frac{\lam_p t}{p-1}\right),\;\;2\le p<\infty,\;\;(ii)\; \phi_\infty(x,t)=\left(1- \frac{r^2}{R^2}\right)^2 \exp\left(-\frac{\lam_\infty t}{3}\right),
\een
where $\al=(3/2)+n/(2(p-1))$. Also, from Lemma \ref{2.6}, one can write the values of
\eqRef{4.2}
\lam_p=\frac{K_1}{R^p},\;\;2\le p<\infty,\;\;\mbox{and}\;\;\lam_\infty=\frac{K_2}{R^4},
\ee
where $K_1=K_1(p,n)$ and $K_2$ is a universal constant. 

Call $\hat{\phi}_p(x,t)=m\phi_p(r,t)$ in $\overline{B}_R(y)\times(0,T)$. By Lemma \ref{2.6}, we see that the function $\hat{\phi}_p$ is a sub-solution in
$B_R(y)\times (0,T)$, $\hat{\phi}_p(x,0)\le m$, and $\phi_p(x,t)=0$, in $|x-y|=R.$ Using Theorem \ref{2.2} and Remark \ref{2.20}, $\hat{\phi}_p(x,t)\le u(x,t),\;\forall(x,t)\in B_R(y)\times(0,T)$. Writing $\mu_p=\lam_p/(p-1)$, for $2\le p<\infty$, and $\mu_\infty=\lam_\infty/3$, get
$$\hat{\phi}_p(y,t)=m\exp\left(-\mu_p t\right)\le u(y,t),\;\;0\le t<T.$$
Using (\ref{4.2}) and letting $R\rightarrow \infty$, we get $u(y,t)\ge m.$ This shows the lower bound in the theorem.

{\bf (ii) Upper bound.} We use Lemma \ref{2.7} and recall Remark \ref{2.70}. Recall the expressions for $\phi_p$ and take
$\al=1$ to obtain $\phi_p(x,t)=\phi_p(r,t)$ as 
\bea\label{4.30}
&&\phi_p(x,t)=\exp\left( a\left[ (t+1)^{p}-1\right]+ b (t+1) r^{p/(p-1)} \right),\;\;2\le p<\infty,\nonumber\\
&&\phi_\infty(x,t)=\exp\left( a [ (t+1)^{4}-1]+ b (t+1)r^{4/3} \right),\;\;\;\;p=\infty,
\eea
where $r=|x|$.
Also, recall there are constants $K_1=K_1(p,n)$ and $K_2=K_2(p)$, and absolute constants
$K_3$ and $K_4$  such that
\bea\label{4.3}
&&(i)\;\;a= K_1 b^{p-1},\;\;\mbox{and}\;\;0<b^{p-1}<\frac{K_2}{(1+T)^{p}},
\;\;\mbox{for $2\le p<\infty$ and}\nonumber\\
&&(ii)\;\;a=K_3 b^3,\;\;\mbox{and}\;\;0<b^3<\frac{K_4}{(1+T)^4},\;\;\mbox{for $p=\infty$}.
\eea
Then $\phi_p(0,0)=1$ and $\G_p \phi_p\le 0$ in $\IR^n\times(0,T).$

Let $b=3\ve$ where $\ve>0$ is small so that the conditions in (\ref{4.3}) are satisfied. We get from (\ref{4.3}),
\eqRef{4.4}
a=(3\ve)^{p-1}K_1,\;\;\mbox{for $2\le p<\infty$, and}\;\;a=27\ve^3K_3,\;\;\mbox{for $p=\infty.$} 
\ee
Set 
$$\beta=\frac{p}{p-1},\;\;\mbox{for $2\le p<\infty,$ and}\;\;\beta=\frac{4}{3},\;\;\mbox{for $p=\infty$.}$$
Fix $y\in H$; set $r=|x-y|$ and $R>0$, so large that 
\ben
\sup_{0\le t\le T} u(x,t)\le \exp( \ve r^\beta), \;\;\mbox{for $r\ge R.$}
\een

Call $C_{R,T}=B_R(y)\times(0,T)$. 
Define $v_p(x,t)=M\phi_p(x,t),\;\forall(x,t)\in C_{R,T}$. Then
in $0\le r\le R$, for large enough $R$,
$$v_p(y,0)=M,\;\;v_p(x,0)\ge M,\;\;\mbox{and}\;\;v_p(x,t)\ge \exp\left(2\ve R^\beta\right),\;\mbox{on $|x-y|=R$ and $0\le t\le T$.}$$
By Theorem \ref{2.2}, $u(x,t)\le v_p(x,t)$ in$ C_{R,T}$. Using (\ref{4.30}), (\ref{4.4}) and $v_p(y,t)=M\phi_p(0,t)$, we get
$$u(y,t)\le v_p(y,t)=M \exp((3\ve)^{p-1}K),\;2\le p<\infty, \;\mbox{and}\;u(y,t)\le v_\infty(y,t)=M \exp(27\ve^{3}K),$$
where $K=K(p,T).$ Clearly, the above estimate holds for any $\ve>0$ and $u(y,t)\le M$. The upper bound in the theorem holds and we obtain the statement of the theorem.\qquad$\Box$
\begin{rem}\label{4.5} It is clear that an analogous version of the Phragm\'en-Lindel\"of property also holds for the operator $G_p$. 
\quad$\Box$
\end{rem}

Next, we address the optimality of Theorem \ref{1.7}. The optimality in the case $p=2$ is discussed in \cite{CP}(page 246) and \cite{Tych}. An example due to Tychonoff shows that the 
growth condition in Theorem \ref{1.7} is optimal for the heat equation. We discuss below the case $2<p\le \infty$.
\vsp
\begin{rem}\label{4.6}{(Optimality)}

{\bf Case (i) $2<p<\infty$:} We now construct an example for Trudinger's equation in $\IR^n\times(0,T)$. Let $2<p<\infty$. Set $r=|x|$ and consider the function
\eqRef{4.7}
F(x,t)=\exp\left( A\frac{r^{p/(p-1)}}{t^\ve}-\frac{1}{t^a} \right),\;\;0\le r<\infty\;\mbox{and}\;0<t<T,
\ee
where we choose
\eqRef{4.8}
a>\frac{1}{p-2},\;\;\ve=\frac{a+1}{p-1}\;\;\mbox{and}\quad A= \frac{p-1}{p} \left( \frac{a(p-1)}{n}\right)^{1/(p-1)}.
\ee
It follows easily that $0<\ve <a$. Note that our construction works only for $p>2$. We set $F(x,0)=\lim_{t\downarrow 0}F(x,t)=0$, for any $x\in \IR^n$. 

Our goal is to show that $F$ is a sub-solution in $\IR^n\times (0,T)$. To simplify our calculations, we use Lemma \ref{2.3} and show that
$G_p H\ge 0$ where 
$$H=\log F= A\frac{r^{p/(p-1)}}{t^\ve}-\frac{1}{t^a}.$$

For completeness, we provide details here. Also, see Lemma \ref{2.70}. Differentiating,
\ben
H_r(r,t)=\left(\frac{At^{-\ve} p}{p-1}\right)r^{1/(p-1)} ,\;H_{rr}(r,t)=\left(\frac{At^{-\ve}p}{p-1}\right)\frac{r^{(2-p)/(p-1)}}{p-1},\;\mbox{and}\;H_t=\frac{-\ve A r^{p/(p-1)}}{t^{\ve+1}}+\frac{a}{t^{a+1}}.
\een
Thus,
\ben
&&\D_pH+(p-1)|DH|^p=|H_r|^{p-2}\left( (p-1) H_{rr}+\frac{n-1}{r}H_r\right)+(p-1)|H_r|^p\\
&&=\left( \frac{At^{-\ve}p}{p-1}\right)^{p-1}r^{(p-2)/(p-1)} \left(r^{(2-p)/(p-1)}+(n-1)r^{(2-p)/(p-1)}\right)+(p-1)\left(\frac{At^{-\ve}p}{p-1}\right)^p r^{p/(p-1)} \\
&&=\left( \frac{Ap}{p-1}\right)^{p-1}\frac{n}{t^{\ve(p-1)}}+(p-1)\left(\frac{Ap}{p-1}\right)^p \frac{r^{p/(p-1)}}{t^{\ve p}}.
\een
Using the above mentioned calculations and (\ref{4.8}), we obtain, in $\IR^n\times(0,T),$
\ben
G_p H=\D_pH+(p-1)|DH|^p-(p-1)H_t
\ge \left( \frac{Ap}{p-1}\right)^{p-1}\frac{n}{t^{\ve(p-1)}}-\frac{a(p-1)}{t^{a+1}}= 0.
\een

{\bf Case (ii) $p=\infty$:} Next we address the case $p=\infty$. This is similar to the Case (i). Let 
\eqRef{4.9}
a>1/2,\quad \ve=(a+1)/3\quad\mbox{and}\quad A=\left(3^5 a/4^3\right)^{1/3}.
\ee
Set $r=|x|$ and define
\be
F(x,t)=\exp\left( A\frac{r^{4/3}}{t^\ve}-\frac{1}{t^a} \right),\;\;\mbox{in $\IR^n\times(0,T)$}.
\ee
Set $F(x,0)= \lim_{t\downarrow 0} F(x,t)=0$, for any $x\in \IR^n$.  
As done before, we show that $G_\infty H\ge 0$ where 
$$H(x,t)=H(r,t)=\log F=A\frac{r^{4/3}}{t^\ve}-\frac{1}{t^a}.$$
Differentiating,
\ben
H_r=\left(\frac{4A t^{-\ve}}{3}\right)r^{1/3} ,\;\;\;H_{rr}(r,t)=\left(\frac{4At^{-\ve}}{9}\right)r^{-2/3}, \;\;\mbox{and}\;\;H_t=\frac{-A\ve r^{4/3}}{t^{\ve+1}}+\frac{a}{t^{a+1}}.
\een
Using the above and (\ref{4.9}), we obtain in $\IR^n\times(0,T)$,
\ben
&&G_\infty H=\D_\infty H+|DH|^4-3H_t=|H_r|^{2}H_{rr}+|H_r|^4-3H_t\\
&&=\left(\frac{4^3}{3^4}\right)\frac{A^3}{t^{3\ve}}+ \left( \frac{4}{3}\right)^{4}\frac{A^4r^{4/3}}{t^{4\ve}}+\frac{3A\ve r^{4/3}}{t^{\ve+1}}-\frac{3a}{t^{a+1}}  \ge   
\left(\frac{4^3}{3^4}\right)\frac{A^3}{t^{3\ve}}-\frac{3a}{t^{a+1}}=0.
\een

Finally, we get a sub-solution $\hat{F}=\max\{1, F\}$ which takes the value $1$ on $\IR^n\times\{0\}$. For a proof that $\hat{F}$ is a sub-solution, see part (ii) of Remark \ref{2.20}. $\Box$
\end{rem}
\vsp
\section{Positive solutions of $\Df u+\lam u^{p-1}=0,\;\;2\le p<\infty.$}

In the proofs of Parts (i) and (ii) of Theorem \ref{1.6}, we used the existence of a function $\psi_\lam$ for the problem in (\ref{3.3}). In order to make our work self-contained, we now address the question of existence of $\psi_\lam$ in the viscosity setting. We use ideas similar to those in \cite{BM0} which addresses the case of the infinity-Laplacian. We refer to \cite{CIL} for definitions.

The sets $usc(\Om)$ and $lsc(\Om)$ stand for the set of upper semi-continuous functions
and the set of lower semi-continuous functions in $\Om$, respectively. We assume $2\le p<\infty$ and $\Om\subset \IR^n$ is a bounded domain. To keep our work as brief as possible, we state our results as remarks.

\begin{rem}{(Maximum Principle)}\label{6.1}
Suppose that $u\in usc(lsc)(\overline{\Om})$ and $f:\Om\times\IR\rightarrow \IR$ is continuous. Assume that $f(x,t)<(>)0,\;\forall(x,t)\in \Om\times \IR$. 
$$\mbox{If $\Df u+f(x,u)\ge(\le)0$, in $\Om$, then $\sup_\Om u\le \sup_{\p\Om} u\;(\inf_\Om u\ge \inf_{\p\Om} u)$}.$$
\end{rem}
\begin{proof} We prove the maximum principle. Set $\ell=\sup_\Om u$ and $m=\sup_{\p\Om} u$, and assume that $\ell>m$. Let $\ve>0$ and $q\in \Om$ be such that $2\ve=\ell-m$ and $u(q)\ge\ell-\ve/2$. Call $\rho=
\sup_{x\in \p\Om}|x-q|$ and set
$$\psi(x)=\ell-\ve-\ve\left(|x-q|/\rho\right)^2,\;\;\forall x\in \overline{\Om}.$$ 
Thus, $(u-\psi)(q)>0$ and $(u-\psi)(x)\le m-(\ell-2\ve)=0,\;\forall x\in \p\Om.$ Noting that $u-\psi\in usc(\overline{\Om})$, let $z\in \Om$ be such that $u-\psi$ has a maximum. Using (\ref{2.50}),
$\Df \psi(z)+f(z,u(z))\le f(z, u(z))<0.$
This is a contradiction and the assertion holds. The proof of the minimum principle follows similarly.
\end{proof}

We prove a version of the strong maximum principle that is used in this work.

\begin{rem}{(Strong Maximum Principle)}\label{6.10} Let $f\in C(\Om\times\IR, \IR)$; assume that 
$\inf_\Om |f(x,t)|=0$ if and only if $t=0$. 

(a) Suppose that $f\le 0$ and $u\in usc(\overline{\Om})$ solves $\Df u+f(x,u)\ge 0$, in $\Om$. If 
$\sup_{\p\Om} u>0$ or $\sup_{\overline{\Om}}u<0$ then $u(x)<\sup_{\p\Om} u,\;\forall x\in \Om$.

(b) Suppose that $f\ge 0$ and $u\in lsc(\overline{\Om})$ solves $\Df u+f(x,u)\le 0$, in $\Om$. If 
$\inf_{\p\Om} u<0$ or $\inf_{\overline{\Om}}u>0$ then $u(x)>\inf_{\p\Om} u,\;\forall x\in \Om$.
\begin{proof} We show (a). Suppose that there is a point $z\in \Om$ such that 
$u(z)=\sup_{\overline{\Om}}u\ge \sup_{\p\Om}u.$ Clearly, $u(z)\ne 0.$ For $\ve>0$, small, define
$v_\ve(x)=u(z)+\ve |x-z|^2$, in $\Om$. Then $v_\ve \in C^2(\Om)$, $(u-v_\ve)(z)=0$ and
$$u-v_\ve =u(x)-u(z)-\ve |x-z|^2=u(x)-\sup_{\overline{\Om}}u-\ve|x-z|^2<0, \;\forall x\in \overline{\Om},\;x\ne z.$$
Thus, $z$ is the only point of maximum of $u-v_\ve$. Noting that $\D_p r^2=2^{p-1}r^{p-2}(p+n-2),\;r=|x-z|,$ and using the definition of a viscosity sub-solution, we get for $2<p<\infty$, 
$$(\Df v_\ve)(z)+f(z,u(z))=f(z,u(z))\ge 0\;\;\mbox{and}\;\;(\Delta v_\ve)(z)+f(z,u(z))=-2n\ve +f(z,u(z))\ge 0.$$ 
Letting $\ve\rightarrow 0$, 
$f(z,u(z))\ge 0$, and $u(z)=0$. This is a contradiction and the claim holds. Proof of (b) is similar.
 \end{proof}
\end{rem}

Let $S^{n\times n}$ be the set of symmetric $n\times n$ matrices and $Tr$ denote the trace of a matrix.
Define for $(q,X)\in \IR^n\times S^{n\times n}$ ($L_p$ is the differentiated version of $\Df$),
\eqRef{2.00}
L_p(q, X)=\left\{ \begin{array}{lcr} |q|^{p-2}Tr(X)+(p-2)|q|^{p-4} q_iq_jX_{ij},& p>2,\;q\ne 0,\\ Tr(X),& p=2,\\ 0& p>2, \;q=0. \end{array}\right.
\ee

\begin{rem}{(Comparison Principle)}\label{6.2}
Let $2\le p<\infty$, $f$ and $g\in C(\Om, \IR)$. Suppose that $u\in usc(\overline{\Om})$ and $v\in lsc(\overline{\Om})$ solve
$\D_p u+ f(x,u(x))\ge 0$ and $\D_p v+g(x, v(x))\le 0,$ in $\Om$.
If $sup_{\Om} (u-v)>\sup_{\p\Om} (u-v)$ then there is a point $z\in \Om$ such that
$$(u-v)(z)=\sup_\Om(u-v)\;\;\;\mbox{and}\;\;\;g(z, v(z))\le f(z, u(z)).$$
\end{rem}
\begin{proof} We adapt the proof in \cite{CIL} (also see \cite{BM0}) and provide a
brief outline. 

Set $M=\sup _{\Om}(u-v)$. Then one may find a point $z\in \overline{\Om}$ and sequences $x_{\ve}$ and $y_{\ve}$ such that (i) $M=(u-v)(z)$, and (ii) 
$x_\ve,\;y_\ve\rightarrow z$ as $\ve\rightarrow 0$. Moreover, since $M>\sup_{\partial \Om}(u-v)$, there is an open set $O$ such that $z,\; x_{\ve}$ and $y_{\ve}\in O\subset\subset \Om$. 
Also, there exist (see \cite{CIL})
  $X_{\ve},\;Y_{\ve}\in S^{n\times n}$ such that $((x_{\ve}-y_{\ve})/\ve, X_{\ve})\in \bar{J}^{2,+}u(x_{\ve})$ and $((x_{\ve}-y_{\ve})/\ve, Y_{\ve})\in \bar{J}^{2,-}v(y_{\ve})$. Moreover, $X_{\ve}\le Y_{\ve}$. Using the definitions of $\bar{J}^{2,+}$ and $\bar{J}^{2,-}$, we see that 
$$-f(x_{\ve},u(x_{\ve}))\le L_p((x_\ve-y_\ve)/\ve, X_\ve)  \le L_p((x_\ve-y_\ve)/\ve, Y_\ve) \le -g(y_{\ve}, v(y_{\ve})).$$
Now let $\ve\rightarrow 0$ to conclude that $g(z,v(z))\le f(z,u(z)).$  \end{proof}

Remark \ref{6.2} leads to the following comparison principle, see \cite{BM0}.

\begin{rem}{(Quotient Comparison)}\label{6.3} Let $u\in usc(\overline{\Om})$ and $v\in lsc(\overline{\Om})\cap L^{\infty}(\Om),\;v>0$. Suppose that $\lam$ and $\bar{\lam}$ are both positive. 

\NI (a) Let $\lam<\bar{\lam}$, $u$ and $v$ solve
$\Df u +\lam |u|^{p-2}u\ge 0\;\mbox{and}\;\Df v+\bar{\lam} v^{p-1}\le 0,\;\mbox{in $\Om$}.$
Then 
$$
\mbox{either $u\le 0$}\;\;\mbox{in $\Om$\qquad or }\;\;\;\;(u/v)(x)\le \sup_{\p\Om}(u/v)\;\;\mbox{in $\Om$}.$$
(b) Similarly, if $\bar{\lam}\le \lam$, $u>0$ and $v>0$ solve $\Df u-\lam u^{p-1}\ge 0\;\mbox{and}\;\Df v-\bar{\lam} v^{p-1}\le 0,\;\mbox{in $\Om$},$
then $\sup_{\Om} (u/v)\le \sup_{\p\Om}(u/v).$
\end{rem}
\begin{proof} Set $\mu=\sup_{\p\Om} (u/v)$ and $\nu=\sup_\Om (u/v)$. We observe that 
\eqRef{6.4}
u-\mu v\le 0\;\;\mbox{in $\p\Om$\qquad and}\;\;\;\;\;u-\nu v\le 0\;\;\mbox{in $\Om$}.
\ee
We prove (a). Assume that $\nu>0$ and $\mu<\nu$. Using (\ref{6.4}), 
$\sup_{\p\Om}(u-\nu v)<0$ and $\sup_\Om (u-\nu v)=0$.
Since $\Df (\nu v)+\bar{\lam}(\nu v)^{p-1}\le 0$, by Remark \ref{6.2}, we conclude that there is a point $y\in \Om$ such that $(u-\nu v)(y)=\sup_\Om(u-\nu v)=0$, implying that $u(y)>0$ and
$$\bar{\lam} (\nu v(y))^{p-1}\le \lam u(y)^{p-1}= \lam (\nu v(y))^{p-1}.$$
We have a contradiction and the assertion holds. 

To show (b), use (\ref{6.4}), $\mu<\nu$ to conclude that $\sup_\Om(u-\mu v)>0$. Since $\sup_{\p\Om}(u-\mu v)=0$ in $\p\Om$, Remark \ref{6.2} implies that there is a point $z\in \Om$ such that $(u-\mu v)(z)=\sup_\Om (u-\mu v)>0$ and
$$\lam u(z)^{p-1}\le \bar{\lam} (\mu v(z))^{p-1}<\bar{\lam} u(z)^{p-1}.$$
We have a contradiction and the assertion holds. \end{proof}

\begin{rem}\label{6.5} We extend the result in Remark \ref{6.3}(a) to include the case $\lam=\bar{\lam}$, that is, $\Df v+\lam v^{p-1}\le 0$ in $\Om$. 

Set $m=\inf_{\p\Om} v$, $M=\sup_\Om v$ and $v_t=v-t m$, where $0<t<1$. By Remark \ref{6.10}, $v> m$ and $v_t> (1-t)m>0$ in $\Om$. 
Since $\Df v\le -\lam v^{p-1}$, choose $\ve>0$, small (depending on $t$), so that
\bea\label{6.50}
\D_p v_t+(\lam+\ve) v_t^{p-1}&\le& v^{p-1} \left[-\lam+ (\lam+\ve) \left(\frac{v-t m}{v}\right)^{p-1} \right]\nonumber\\
&\le& v^{p-1} \left[ \frac{\lam+\ve}{\lam} \left(1-\frac{t m}{M}\right)^{p-1}-1 \right]\le 0.
\eea
Remark \ref{6.3}(a) holds for $\lam=\bar{\lam}$, since $u/v_t\le \sup_{\p\Om} u/v_t$ for any $0<t<1$. Moreover, (\ref{6.50}) leads to the estimates
\ben
(i)\;\; 0<\frac{\ve}{\lam}\le\left( \frac{1}{1-t(m/M)}\right)^{p-1}-1 \;\;\;\;\mbox{and}\;\;(ii)\;\; 1<\frac{M}{m}\le t \left( \frac{ (\lam+\ve)^{1/(p-1)}} { (\lam+\ve)^{1/(p-1)}-\lam^{1/(p-1)} }\right).\quad \Box
\een
\end{rem} 

\begin{rem}\label{6.6} Remark \ref{6.5} implies the following. Suppose that $\dl>0$ and $u\in C(\overline{\Om})$ solves
\eqRef{6.7}
\D_p u+\lam u^{p-1}=0\;\;\mbox{in $\Om$, $u>0$, and $u=\dl$ on $\p \Om$.}
\ee
For each $0<t<1$, there is an $\ve>0$, depending on $\sup_\Om u$, $\dl$ and $t$, such that $u_t=u-t\dl$ and 
$\D_p u_t+(\lam+\ve)u_t^{p-1}\le 0$ in $\Om$. Next, $\bar{u}_t=u_t/(1-t)$ is a super-solution of (\ref{6.7}) with $\lam$ replaced by $\lam+\ve$, and 
$\bar{u}_t=\dl$ on $\p \Om$. Also, $v(x)=\dl$ is a sub-solution. Both $v$ and $\bar{u}_t$ attain the boundary data in (\ref{6.7}) and $v\le \bar{u}_t$.
Remark \ref{6.5} and Perron's method (see \cite{CIL}) imply there is a $\psi\in C(\overline{\Om}),\;\psi>0,$ with
$\D_p \psi+(\lam+\ve)\psi^{p-1}=0$ in $\Om$, and $\psi=\dl$ on $\p \Om$. \quad $\Box$
\end{rem}

Let $\dl>0$. We now discuss existence of positive solutions $u\in C(\overline{\Om})$ to the problem
\eqRef{6.8}
\Df u+\lam u^{p-1}=0\;\mbox{in $\Om,$ and $u=\dl$ in $\p\Om$.}
\ee
We define
\eqRef{6.9}
E_\Om=\{\lam\ge 0:\;\mbox{problem (\ref{6.8}) has a positive solution $u$}\}\;\mbox{and}\;\;\lamo=\sup E_\Om.
\ee
We show in Remark \ref{6.11} below that $(0,\lamo)\subset E_\Om.$ Let $M_\lam=\sup_\Om u$, where $u$ solves (\ref{6.8}). Note that $u\ge \dl.$ We observe that if $0<\lamo<\infty$ (see Remark \ref{6.15}) and $0<\lam<\lamo$ then by Remark \ref{6.5}(i), for any $0<t<1$,
\eqRef{6.90}
0<\ve\le \lam \left[\left( \frac{1}{1-t(\dl/M_\lam)}\right)^{p-1}-1\right]\le \lamo -\lam\;\;\;\mbox{and}\;\;\;
M_\lam\ge \dl  \left( \frac{\lamo^{1/(p-1)}}{\lamo^{1/(p-1)}-\lam^{1/(p-1)} }\right).
\ee
Thus, $\lim_{\lam\rightarrow \lamo} M_\lam=\infty.$

Our goal is to show existence for small $\lam>0$ and to prove that $\lamo<\infty$. This would provide the information necessary for Theorem \ref{1.6}. Next, we show that 
(i) if $\lam\in E_\Om,\;\lam>0,$ then $[\lam,\lamo)\subset F_\Om$, and (ii) the domain monotonicity property of $\lamo$.

\begin{rem}\label{6.11} Let $E_\Om$ be as in (\ref{6.9}). Then $\lamo\not\in E_\Om$ and the following hold.\\
(i) If $\lam\in E_\Om$ then $(0,\lam^{\prime})\subset E_\Om$, for some $\lam^{\prime}>\lam.$ 
 Thus, $(0,\lamo)\subset E_\Om$.\\
(ii) If $O\subset\Om$ is a sub-domain then $\lamo\le \lam_O$.
\begin{proof} 
If $\lamo<\infty$ and $\lamo\in E_\Om$ then, by Remark \ref{6.7}, $\lamo$ will not be the supremum.

Part (i). Let $u_\lam\in C(\overline{\Om})$ solve
$\Df u_\lam+\lam u_\lam^{p-1}=0\;\mbox{in $\Om,\;u_\lam>0,$ and $u_\lam=\dl$ in $\p\Om.$}$
Clearly, $v=\dl$ in $\overline{\Om}$, is a sub-solution and $u_\lam$ is a super-solution of
\eqRef{6.12}
\Df w+\mu w^{p-1}=0\;\;\mbox{in $\Om,$ and $w=\dl$ in $\p\Om$,}
\ee
for any $0<\mu\le \lam$. Since both $v$ and $u_\lam$
attain the boundary data and $v\le u_\lam$,
recalling Remarks \ref{6.3}, \ref{6.5} and applying Perron's method, we obtain a positive solution of (\ref{6.12}) for each $0<\mu\le \lam.$ Combining this with Remark \ref{6.6} we see that $(0,\lam^{\prime})\subset E_\Om$ for some $\lam^{\prime}>\lam.$ Clearly, $(0,\lamo)\subset E_\Om.$

Part (ii). Assume that $\lam_O<\infty$ (otherwise we are done) and 
$\lamo>\lam_O$. By the definition of $E_\Om$ and Part (i), there is a $u\in C(\overline{\Om}),\;u>0,$ that solves 
$$\Df u+\lam_O u^{p-1}=0\;\;\mbox{in $\Om$, and $u=\dl$ in $\p\Om.$}$$
If $\lam<\lam_O$ then there is a unique positive solution $v_\lam$ to
$$\Df v_\lam +\lam v_\lam^{p-1}=0\;\;\mbox{in $O$, and $v_\lam=\dl$ in $\p O$.}$$
By Remark \ref{6.1}, $u\ge v_\lam$ on $\p O$, and by Remark \ref{6.3}, $u\ge v_\lam$ in $O$. Since this holds for any $\lam<\lam_O$, we apply (\ref{6.90})(on $O$) and let $\lam\rightarrow \lam_O$ to conclude that $u$ is unbounded. This is a contradiction and the claim holds. 
\end{proof} 
\end{rem}

We record a consequence of (\ref{6.90}) and Remark \ref{6.11}. 
\begin{rem}\label{6.13} Let $h\in C(\p\Om)$ with $\inf_{\p\Om} h>0$. Suppose $\lam>0$ is such that the problem
$$\Df u+\lam u^{p-1}=0\;\;\mbox{in $\Om$, $u=h$ in $\p\Om$,}$$
has a positive solution $u\in C(\overline{\Om})$. Call $E_{\Om,h}$ the set of all $\lam$'s for which the above has a positive solution. Set $\lam_{\Om,h}=\sup E_{\Om, h}$. We claim that
$$\lam_{\Om, h}\le \lamo,$$
where $\lamo$ is as in (\ref{6.9}). We comment that the two are equal and since the proof of equality requires existence we will not address it here, see Theorem \ref{1.6}.

\begin{proof} Assume that $\lamo<\infty$ and $\lamo<\lam_{\Om, h}$ (otherwise we are done). Thus, there  is a $\lam_1$ with 
$\lamo<\lam_1\le \lam_{\Om, h}$ and a function $u\in C(\overline{\Om}),\;u>0,$ so that 
$$\Df u+\lam_1 u^{p-1}=0\;\;\mbox{in $\Om$, and $u=h$ in $\p\Om$.}$$
By Remark \ref{6.11}(i), for any $0<\lam<\lamo$, there is a function $v_\lam$ so that $\Df v_\lam +\lam v_\lam^{p-1}=0$ in $\Om$, and $v_\lam=\dl$, in $\p\Om$,
where $0<\dl\le \inf_{\p\Om}h$. By Remark \ref{6.3}, $v_\lam\le u$ in $\Om$. Letting $\lam\rightarrow \lamo$ and applying (\ref{6.90}), we arrive at a contradiction. The claim holds.
\end{proof}
\end{rem}

We show existence for the problem in (\ref{6.8}). We assume that (i) for $2\le p\le n$, $\Om$ satisfies a uniform outer ball condition, and (ii) for $n<p<\infty$, $\Om$ is any domain.

\begin{rem}{(Existence:)}\label{6.13} Consider the problem of finding a positive solution
$u\in C(\overline{\Om})$ to 
\eqRef{6.14} \Df u+\lam u^{p-1}=0\;\;\mbox{in $\Om$, and $u=\dl$ on $\p\Om$.}
\ee
(i) Let $n<p<\infty$ and $\Om$ be any bounded domain. Then there is $\lam_0=\lam_0(p,n,\Om)>0$ 
such that (\ref{6.14}) has a solution $u$ for any $0<\lam<\lam_0$.\\
(ii) The same holds for $2\le p\le n$, if $\Om$ satisfies a uniform outer ball condition.

\begin{proof} The function $v=\dl$ is a sub-solution of (\ref{6.14}) for any 
$\lam>0$ and any $2\le p<\infty$. We construct super-solutions to (\ref{6.14}). Define
$R=\sup_{x,\;y\in \p\Om}|x-y|=\mbox{diam}(\Om).$

{\bf (i) $n<p<\infty$:} Fix $y\in \p\Om$, $0<\tht<1$ and set $r=|x-y|$. For $c>0$ (to be determined)
$$\al=\tht(p-n)/(p-1)\;\;\mbox{and}\;\;w_y(x)=\dl+c r^\al\;\;\forall x\in \overline{\Om}.$$
Using (\ref{2.50}), calculating in $0<r\le R$, 
\ben
\Df w_y=(c \al)^{p-1}r^{(\al-1)(p-2)+\al-2} \left( (p-1)(\al-1)+n-1 \right)=
-\frac{(c\al)^{p-1}(1-\tht)(p-n)}{r^{p-\al(p-1)}}\nonumber
\een
Using the above, we obtain in $\Om$,
\ben
\Df w_y+\lam w_y^{p-1}&\le& -\frac{(c\al)^{p-1}(1-\tht)(p-n)R^{\al(p-1)}}{R^{p}}+\lam (\dl+c R^\al)^{p-1}\\
&=&(\dl+cR^\al)^{p-1}\left[\lam-\left(\frac{cR^\al}{\dl+cR^\al}\right)^{p-1}\left( \frac{(1-\tht)(p-n)\al^{p-1}}{R^{p}}\right) \right].
\een
It is clear that if $0<\lam<(1-\tht)(p-n)\al^{p-1}R^{-p}$ then 
one can find a value of $c>0$ such that $w_y$ is a super-solution. Since $w_y(y)=\dl$ 
and $w_y\ge \dl$ on $\overline{\Om}$, using Remarks \ref{6.3}, \ref{6.5}, and applying Perron's method, the problem (\ref{6.14}) has a positive solution for $\lam>0$, small, and $E_\Om$ is non-empty.

{\bf (ii) $2\le p\le n$:} Let $\rho>0$ be the optimal radius of the outer ball. Fix $y\in \p\Om$ and let $z\in \IR^n\setminus \Om$ such that $B_\rho(z)\subset \IR^n\setminus \Om$ and $y\in \overline{B}_\rho(z)\cap \p\Om$. Set $r=|x-z|$ and take, for $c>0$,
$$\al>\max\left\{0,\; (n-p)/(p-1)\right\}\;\;\;\mbox{and}\;\;\;w_y(x)=\dl+c\left(\rho^{-\al}-r^{-\al}\right),\;\;\rho\le r\le R+\rho.$$
Using (\ref{2.50}), 
\ben
\Df w_y&=&(c\al)^{p-1}r^{-(\al+1)(p-2)-(\al+2)}\left( n-1-(\al+1)(p-1)\right)\\
&=&\frac{(c\al)^{p-1} (n-p-\al(p-1))}{r^{\al(p-1)+p}}
=-\frac{c^{p-1}k}{r^{\al(p-1)+p}},
\een
where $k=k(n,p,\al)>0$. Setting $J=\rho^{-\al}-(R+\rho)^{-\al}$ and using the above,
\ben
\Df w_y+\lam w_y^{p-1}&\le& -\frac{c^{p-1}k}{(\rho+R)^{\al(p-1)+p}}+\lam\left( \dl+cJ\right)^{p-1}\\
&=&\left( \dl+cJ \right)^{p-1}
\left[\lam-\frac{k}{(\rho+R)^{\al(p-1)+p}} \left( \frac{c}{ \dl+ cJ}\right)^{p-1}\right].
\een
Since $c/(\dl+cJ)<1/J$, one can find a $c>0$ such that $w_y$ is super-solution if 
$$0<\lam<\frac{k}{(R+\rho)^p}  \left( \frac{\rho^{\al}}{(R+\rho)^\al-\rho^\al }\right)^{p-1}.$$
Rest of the proof is as in Part (i). 
\end{proof}
\end{rem}

\begin{rem}{(Boundedness of $\lamo$)}\label{6.15} Remark \ref{6.13} shows that $\lamo>0$. We claim that $\lamo<\infty$. By Remark \ref{6.11}, this will follow if we show that $\lam_{B}<\infty$
for any ball $B$ in $\Om$. 
\begin{proof} For ease of presentation, we take the origin $o\in \Om$ and a ball $B_R(o)\subset \Om$.
Set $r=|x|$ and $\lam_R=\lam_{B_R}(o).$

Suppose that $\lam_R=\infty$. By (\ref{6.9}) and Remark \ref{6.11},
$(0,\infty)\subset E_{B_R(o)}.$ Let $\lam>0$ and $\lam_m=m^p\lam,\;m=0,1,\cdots,$. 
For each $m$, call $\phi_m>0$ the solution of 
\eqRef{6.16}
\D_p\phi_m+\lam_m\phi_m^{p-1}=0\;\;\mbox{in $B_R(o)$, and $\phi_m=\dl_m$ in $\p B_R(o)$}.
\ee
Here $\dl_m>0$ is so chosen that $\phi_m(o)=1$. Since $\Df$ is rotation and reflection invariant, 
applying Remark \ref{6.5} to reflections about $n-1$ planes through $o$, it follows that $\phi_m$ is radial. Next, using Remark \ref{6.1} in concentric balls, it is clear that $\phi_m$ is decreasing in $r$.

By Remarks \ref{6.10} and \ref{6.3}, if $\ell<m$ then $1=\phi_\ell(o)/\phi_m(o)\le \dl_\ell/\dl_m\;\mbox{and}\;\;0<\dl_m\le \dl_{m-1}\le \cdots\le \dl_1<1.$

Next, we scale as follows. For $m=1,2,\cdots,$ set $\forall x\in B_R(o)$,
$\psi_m(y)=\phi_m(x)$ where $y=mx.$ Thus, $\D_p\phi_m=m^p\D_p \psi_m$ and recalling $\lam_m=m^p \lam$, we get
$$\D_p\psi_m+\lam \psi_m^p=0\;\;\mbox{in $B_{mR}(o)$, $\psi_m(o)=1$ and $\psi_m(mR)=\dl_m$.}$$
Applying Remark \ref{6.5} in $B_{\ell R}(o)$, $\ell=1,2,\cdots, m-1$,
$$1=\frac{\psi_\ell(o)}{\psi_m(o)}\le \frac{\psi_\ell(r)}{\psi_m(r)}\le \frac{\dl_\ell}{\psi_m(\ell R)}\;\;\mbox{and}\;\;1\le  \frac{\psi_m(r)}{\psi_\ell(r)}\le \frac{\psi_m(\ell R)}{\dl_\ell}.$$
It is clear that $\psi_\ell(r)=\psi_m(r),\;0\le r\le \ell R,$ and $\dl_\ell=\psi_m(\ell R).$ For each $\ell=1,2,\cdots, m-1$, $\psi_m$ extends $\psi_\ell$ to $B_{mR}(o)$, and in particular, extends $\psi_1$ (defined on $B_R(o)$) to all 
$B_{mR}(o)$. 

Thus, for any $r>0$ we define $\psi_1(r)=\psi_m(r),\;\mbox{for any $m$ such that $mR>r$},$
thus extending $\psi_1$ to $\IR^n$. Also, $\psi_1$ is decreasing and $\psi_1(mR)=\dl_m,\;\;\forall m=1,2,\cdots.$

We claim that $\lim_{m\rightarrow \infty}\dl_m=0$. For $1\le \ell\le m$ and $0<\al<1$, we calculate (see (\ref{6.16})),
\ben
\Df \phi_m^\al&+&\lam_\ell \phi_m^{\al(p-1)}=\al^{p-1}\mbox{div}\left(\phi_m^{(\al-1)(p-1)}|D\phi_m|^{p-2}\phi_m\right)+\lam_\ell \phi_m^{\al(p-1)}\\
&=&\al^{p-1}\left( \phi^{(\al-1)(p-1)}_m\Df \phi_m+ (\al-1)(p-1)\phi_m^{\al (p-1)-p}|D\phi_m|^p\right)+  \lam_\ell\phi^{\al(p-1)}_m\\
&\le & -\al^{p-1}\lam_m \phi_m^{\al(p-1)}+\lam_\ell \phi_m^{\al(p-1)}=\left(\lam_\ell- \al^{p-1}\lam_m\right)\phi^{\al(p-1)}_m=0,
\een
if $\al=(\lam_\ell/\lam_m)^{1/(p-1)}=(\ell/m)^{p/(p-1)}$. 
Thus, $\phi_m^\al$ is a super-solution and Remark \ref{6.5} shows that
$1=\phi_\ell(o)/\phi^\al_m(0)\le \dl_\ell/\dl^\al_m.$ Using the value of $\al$, we have
$(\dl_m)^{(1/m)^{p/(p-1)}}\le (\dl_\ell)^{(1/\ell)^{p/(p-1)}},\;\forall \ell=1,2,\cdots,m-1.$
Hence, 
\eqRef{6.17}
\dl_m\le \dl_1^{m^{p/(p-1)}}.
\ee
Since $\dl_1<1$ (see Remark \ref{6.10} and (\ref{6.15})), 
$\lim_{m\rightarrow \infty}\psi_1(mR)=\lim_{m\rightarrow \infty}\dl_m=0$ and $\lim_{r\rightarrow \infty}\psi_1(r)=0.$

We now obtain lower bounds for $\psi_1$. Note that $\psi_1(o)=1$, $\psi_1$ is decreasing and 
$\psi_1(\ell R)=\dl_\ell,\;\forall\ell=1,2,\cdots$ (see above). For any $m=2,3,\cdots$, define in $R\le r\le 2mR$,
\ben
&&f_m(r)=\dl_1-\left(\dl_1-\dl_{2m}\right) (r^{\beta}-R^\beta)/((2mR)^\beta-R^\beta),\;\;\mbox{where}\;\;\beta=\frac{p-n}{p-1},\;p\ne n,\\
&&f_m(r)=\dl_1- (\dl_1-\dl_{2m})\log (r/R)/\log (2m) ,\;\;p=n.
\een
Then $f_m(R)=\dl_1$, $f_m(2mR)=\dl_{2m}$, $f_m> 0$ and $\D_p f_m=0$, in $B_{2mR}(o)\setminus B_R(o)$. Thus, $\D_p f_m+\lam f_m^{p-1}\ge 0$ in $B_{2mR}(o)\setminus B_R(o)$. By Remark \ref{6.5},
$f_m\le \psi_1$ in $B_{2mR}(o)\setminus B_R(o)$. Taking $r=mR$, we get, for large $m$, 
\ben
&&(i)\;f_m(mR)\ge \dl_1(1-2^{-\beta}), \;\;p>n,\;\;\;(ii)\;f_m(mR)\ge \dl_1(1-2^\beta)/m^{-\beta}, \;\;2\le p<n,\\
&&(iii)\;f_m(mR)\ge \dl_1\log 2/\log (2m), \;\;p=n.
\een
Since $\dl_m=\psi_1(mR)\ge f_m(mR)$, the above and (\ref{6.17}) lead to a contradiction. Thus, the claim holds and $\lam_R<\infty$ and $0<\lamo<\infty.$ 
\end{proof}
\end{rem}

\begin{rem}{(Scaling property)}\label{6.18} Let $\lam_R=\lam_{B_R(o)}.$ We claim that $\lam_R R^p=k$, for any $R>0$, where $k=k(p,n)>0.$ 
Let $R_1>0$, $R_2>0$ and $0<\lam<\lam_{R_1}$.  Suppose that $\phi>0$ solves $\D_p \phi+\lam \phi^{p-1}=0,$ in $B_{R_1}(o)$, with
$\phi_1=\dl$ on $\p B_{R_1}(o)$. Set $\psi(y)=\phi(x)$ where $y=R_2 x/R_1$. Then 
$\D_p\psi+\lam(R_1/R_2)^p \psi^{p-1}=0$ in $B_{R_2}(o)$, and $\psi=\dl$ on $\p B_{R_2}(o)$.
Clearly, $\lam_{R_1}R_1^p\le \lam_{R_2}R_2^p$. Replacing $R_1$ by $R_2$ shows equality.
$\Box$
\end{rem}

\begin{rem}\label{6.19}{(Eigenfunction)} The problem
$\D_p u+\lam_R u^{p-1}=0\;\mbox{in $B_R(o)$ and $u=0$ on $\p B_R(o)$,}$
has a positive solution $u$, a first eigenfunction, that is radial and decreasing.
\end{rem}
\begin{proof} Fix $0<\lam<\lam_R$. By Remark \ref{6.18}, let $\bar{R}$ be such that $\bar{R}^p\lam=\lam_R R^p$. Then $\bar{R}>R$. 

For each $k=1,2\cdots$, let (i) $0<\lam<\lam_k<\lam_R$ be such that $\lam_k\downarrow \lam$, (ii) 
$R_k=(\lam/\lam_k)^{1/p}\bar{R}$, and (iii) a unique function $u_k>0$ and $\dl_k>0$ such that (see Remark \ref{6.11})
\eqRef{6.20}
\D_p u_k+\lam u_k^{p-1}=0\;\;\mbox{in $B_{R_k}(o)$, $u_k(o)=1$ and $u_k=\dl_k$ on $\p B_{R_k}(o)$.}
\ee
As seen in Remark \ref{6.15}, $u_k$ is radial and decreasing. Also, 
$R_k<\bar{R}$, for each $k$, and $R_k\uparrow \bar{R}.$ Let $1\le \ell<k$. Applying 
Remark \ref{6.5}, we obtain 
$1\le v_\ell(r)/v_k(r)\le \dl_\ell/v_k(R_\ell)$
and $1\le v_k(r)/v_\ell(r)\le v_k(R_\ell)/\dl_\ell$ in $\overline{B_{R_\ell}(o)}$. Thus, $v_k(R_\ell)=\dl_\ell$, $v_k(r)=v_\ell(r)$ and $v_k$ extends $v_\ell$ to $B_{R_k}(o)$. 

For any $x\in B_{\bar{R}}(o)$, define $v(x)=v(r)=\lim_{k\rightarrow\infty} v_k(r)$. It clear that $v(x)=v_k(x)$ for any $k$ such that $|x|<R_k$. Also, $v(R_k)=\dl_k$. Since every $v_k$ is  decreasing in $r$, $v(r)$ is decreasing in $r$ and solves
$$\D_p v+\lam v^{p-1}=0\;\;\mbox{in $B_{\bar{R}}(o)$, $v>0$ and $v(o)=1$.}$$
Define $v(\bar{R})=\lim_{r\rightarrow \bar{R}}v(r).$ Thus, $v\in C(\overline{B}_{\bar{R}}(o)).$ Since 
$\lam=\lam_{\bar{R}}$, we have $v(\bar{R})=0$, otherwise, by Remark \ref{6.6}, $\lam_{\bar{R}}>\lam.$
Using scaling, we get existence of a radial eigenfunction on $B_R(o)$.
\end{proof}

\vsp
\NI Department of Mathematics, Western Kentucky University, Bowling Green, Ky 42101, USA\\
\NI Department of Liberal Arts, Savannah College of Arts and Design, Savannah, GA 31405, USA

\end{document}